\documentclass[11pt,a4paper]{amsart}
\setlength{\topmargin}{9.6mm} \setlength{\headheight}{0mm}
\setlength{\headsep}{0mm} \setlength{\footskip}{15mm}
\setlength{\textheight}{231mm} \setlength{\oddsidemargin}{4.6mm}
\setlength{\evensidemargin}{4.6mm} \setlength{\marginparsep}{0mm}
\setlength{\marginparwidth}{0mm} \setlength{\textwidth}{149mm}

\usepackage{amscd,amssymb,amsopn,amsmath,amsthm,graphics,amsfonts,enumerate,verbatim,calc}
\usepackage[dvips]{graphicx}

\usepackage{color}

\usepackage{amssymb,amsmath}

\textwidth=15cm \textheight=21.6cm \topmargin=0.00cm
\oddsidemargin=0.00cm \evensidemargin=0.00cm \headheight=14.4pt
\headsep=1cm \numberwithin{equation}{section}
\hyphenation{semi-stable} \emergencystretch=10pt

\newtheorem{theorem}{Theorem}[section]
\newtheorem{lemma}[theorem]{Lemma}
\newtheorem{proposition}[theorem]{Proposition}
\newtheorem{corollary}[theorem]{Corollary}

\theoremstyle{definition}
\newtheorem{definition}[theorem]{Definition}

\theoremstyle{remark}
\newtheorem{remark}[theorem]{Remark}
\newtheorem{example}[theorem]{Example}

\newtheorem{acknowledgement}{Acknowledgement}

\newcommand{\Spec}{\operatorname{Spec}}

\newcommand{\Ht}{\operatorname{ht}}
\newcommand{\CH}{\operatorname{CH}}

\newcommand{\depth}{\operatorname{depth}}

\newcommand{\Cl}{\operatorname{Cl}}

\newcommand{\Frac}{\operatorname{Frac}}

\newcommand{\Sp}{\operatorname{Sp}}
\newcommand{\Proj}{\operatorname{Proj}}
\newcommand{\Reg}{\operatorname{Reg}}
\newcommand{\Nor}{\operatorname{Nor}}

\newcommand{\fm}{\frak{m}}
\newcommand{\fp}{\frak{p}}
\newcommand{\fq}{\frak{q}}

\begin{document}
\title[Normal hyperplane sections of normal schemes]
{Normal hyperplane sections of normal schemes in mixed characteristic}

\author[J. Horiuchi]{Jun Horiuchi}
\address{Department of Mathematics, Nippon Institute of Technology, Miyashiro, Saitama 345-8501, Japan}
\email{jhoriuchi.math@gmail.com}

\author[K. Shimomoto]{Kazuma Shimomoto}
\address{Department of Mathematics, College of Humanities and Sciences, Nihon University, Setagaya-ku, Tokyo 156-8550, Japan}
\email{shimomotokazuma@gmail.com}

\thanks{2010 {\em Mathematics Subject Classification\/}: 12F10, 13B40, 13J10, 14G40, 14H20}

\keywords{Bertini theorem, graded ring, hyperplane section, normal scheme}


\begin{abstract}
The aim of this article is to prove that, under certain conditions, an affine flat normal scheme that is of finite type over a local Dedekind scheme in mixed characteristic admits infinitely many normal effective Cartier divisors. For the proof of this result, we prove the Bertini theorem for normal schemes of some type. We apply the main result to prove a result on the restriction map of divisor class groups of Grothendieck-Lefschetz type in mixed characteristic.
\end{abstract}

\maketitle

\begin{center}
{\textit{Dedicated to Prof. Gennady Lyubeznik on the occasion of his 60th birthday.}}
\end{center}

\section{Introduction}

Let $X$ be a connected Noetherian normal scheme. Then does $X$ have sufficiently many normal Cartier divisors? The existence of such a divisor when $X$ is a normal projective variety over an algebraically closed field is already known. This fact follows from the classical Bertini theorem due to Seidenberg. In birational geometry, it is often necessary to compare the singularities of $X$ with the singularities of a divisor $D \subset X$, which is known as \textit{adjunction} (see \cite{KM98} for this topic). In this article, we prove some results related to this problem in the mixed characteristic case. The main result is formulated as follows (see Corollary \ref{normalCartier} and Remark \ref{normalCartier2}).

\begin{theorem}
Let $X$ be a normal connected affine scheme such that there is a surjective flat morphism of finite type $X \to \Spec A$, where $A$ is an unramified discrete valuation ring of mixed characteristic $p>0$. Assume that $\dim X \ge 2$, the generic fiber of $X \to \Spec A$ is geometrically connected and the residue field of $A$ is infinite. 

Then there exists an infinite family of pairwise distinct effective Cartier divisors $\{D_{\lambda}\}_{\lambda \in \Lambda}$ of $X$ such that each $D_{\lambda}$ is normal and flat over $A$.
\end{theorem}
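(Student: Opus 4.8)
\emph{Sketch of the argument.} The plan is to reduce the statement to a Bertini-type theorem for normal hypersurface sections and then to produce infinitely many good sections using that the residue field of $A$ is infinite. Write $X=\Spec R$, where $R$ is a normal domain, flat and of finite type over $A$; since $A$ is an unramified discrete valuation ring of mixed characteristic $p$, its uniformizer is $p$, and flatness of $R$ over $A$ is equivalent to $p$ being a nonzerodivisor on $R$. Fix a closed immersion $X\hookrightarrow\mathbb{A}^{n}_{A}=\Spec A[x_{1},\dots,x_{n}]$. For a polynomial $F\in A[x_{1},\dots,x_{n}]$ whose image $f\in R$ is a nonzero non-unit, the closed subscheme $D_{F}:=V(F)\cap X=\Spec R/(f)$ is an effective Cartier divisor of $X$, and every divisor we produce will be of this form. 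Thus it suffices to prove that for all sufficiently large $d$ there are infinitely many $F$ of degree at most $d$, giving pairwise distinct divisors $D_{F}$, each normal and flat over $A$.

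The two properties to arrange are flatness over $A$ and Serre's conditions $R_{1}$ and $S_{2}$ (which give normality of $D_{F}$ by Serre's criterion). For flatness: tensoring $0\to R\xrightarrow{\,f\,}R\to R/(f)\to 0$ with $A/pA$ and using flatness of $R$ yields $\Tor^{A}_{1}(A/pA,R/(f))=\Ker\bigl(R/pR\xrightarrow{\,f\,}R/pR\bigr)$, so $D_{F}$ is flat over $A$ exactly when the image $\bar f$ of $f$ in $R/pR$ is a nonzerodivisor. As $R$ is normal, it is $S_{2}$, so the special fibre $R/pR$ is $S_{1}$; hence its associated primes are its finitely many minimal primes, all of height one in $R$, and $R/pR\neq 0$ because $X\to\Spec A$ is surjective. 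Avoiding these finitely many ``vertical'' primes is a nonempty condition, linear in the coefficients of $F$, once $d$ is large. For $R_{1}$ and $S_{2}$ one invokes the Bertini theorem for normal schemes in mixed characteristic, which is the technical heart of the argument: working with the linear system of hypersurfaces of degree $d$ for $d\gg 0$, a general $F$ meets the regular locus of $X$ transversally, so $\Reg(X)\cap V(F)$ is regular and, as $X\setminus\Reg(X)$ has codimension at least two, $R/(f)$ is $R_{1}$; and, using that $R$ is $S_{2}$ together with prime avoidance over the finitely many associated points governing failure of $S_{2}$ for a section, $R/(f)$ remains $S_{2}$. It is essential to use hypersurfaces of large degree rather than hyperplanes, because the special fibre $X_{0}=\Spec R/pR$ has characteristic $p$, where the naive Bertini theorem for linear sections fails; one needs the degree to grow so that $\mathcal{O}(d)$ separates tangent directions on $X_{0}$ (and on $X_{K}$), and the infiniteness of the residue field of $A$ is what makes ``general hypersurface of degree $d$'' a nonempty notion on that positive-characteristic fibre. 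The hypothesis that $X_{K}$ is geometrically connected, together with $\dim X\geq 2$, enters through the Bertini connectedness theorem over $\overline{K}$ to control the generic fibre of $D_{F}$.

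Granting the Bertini input, the set of $F\in A[x_{1},\dots,x_{n}]$ of degree at most $d$ for which $D_{F}$ is normal and flat over $A$ is the set of $A$-points of a nonempty Zariski-open subset $U$ of the affine space $\mathbb{A}^{N}_{A}$ of coefficients; the complement of $U$ is cut out by a nonzero polynomial over $A$, and since $A$ is an infinite domain $U$ has infinitely many $A$-points. To force the resulting divisors to be pairwise distinct one may, in addition, require $F$ to vanish at one prescribed closed point of $X$ and not at another --- possible because $X$ is Jacobson of positive dimension, hence has infinitely many closed points, and these extra constraints remain open and nonempty --- or simply vary the degree $d$; either way one obtains the desired infinite family $\{D_{\lambda}\}_{\lambda\in\Lambda}$. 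The main obstacle throughout is the Bertini theorem for normality in mixed characteristic: because the special fibre has positive characteristic, general hyperplane sections do not suffice, and one must control simultaneously, for a general hypersurface of large degree, the condition $R_{1}$ (the part sensitive to positive characteristic, which forces the degree to grow), the condition $S_{2}$, flatness over $A$, and connectedness of the generic fibre; the rest is genericity bookkeeping together with the elementary fact that a nonzero polynomial over the infinite domain $A$ has infinitely many non-roots.
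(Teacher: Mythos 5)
Your outer reductions are sound and agree with the paper's: normality of a hypersurface section is checked via Serre's $(R_1)+(S_2)$, flatness over $A$ amounts to the chosen element being a nonzerodivisor modulo $p$ (equivalently, avoiding the finitely many minimal primes of $R/pR$, which exist because $X\to\Spec A$ is surjective and $R/pR$ is $(S_1)$), and the infinitude of the family ultimately comes from the infinitude of the residue field. But the proof has a genuine gap at exactly the point you flag as ``the technical heart'': you \emph{invoke} a Bertini theorem for normality in mixed characteristic rather than prove one, and that theorem is precisely what the statement is a corollary of. The paper's route is: compactify $X$ to a normal connected projective $A$-scheme $Y$ (normalizing the closure, which is an isomorphism over $X$), realize $Y\simeq\Proj R$ for a graded normal domain $R$ with $R_0=A$ (Lemma \ref{normalgraded} --- this is where the geometric connectedness of the generic fiber is used, to force $H^0(Y,\mathcal{O}_Y)=A$ rather than a ramified extension; it is not used for any connectedness of the sections, contrary to your reading), pass to the localized pointed affine cone and its completion, and then apply the local Bertini theorem of \cite{OS15}: $(R_1)$ is obtained from non-containment in second symbolic powers via basic elements of $\widehat{\Omega}_{R/A}$ (Lemma \ref{basiclemma}, Proposition \ref{symbolic}), and $(S_2)$ from Flenner's finiteness lemma for primes with $\depth=2<\dim$. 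None of this machinery appears in your sketch, and in particular the hypothesis that $A$ is \emph{unramified} --- which is essential to control the generators of $\widehat{\Omega}_{R/A}$ and is emphasized in the paper as the reason the theorem is stated in this generality --- plays no role in your argument, which is a sign that the core difficulty has not been engaged.

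Two further points would fail as written. First, your claim that one \emph{must} use hypersurfaces of large degree because linear Bertini fails on the characteristic-$p$ special fibre is contrary to what the paper actually proves: Theorem \ref{mainBertini} produces normal \emph{hyperplane} sections (in a suitable embedding coming from degree-one generators of the graded ring), and the characteristic-$p$ pathology is circumvented not by raising the degree but by taking coefficients in $A$ whose genericity is detected modulo $\pi_A$ via the specialization map $\Sp_A$, together with differentiating over $A$ (so that the $dx_i$ generate $\widehat{\Omega}_{R/A}$). Second, your assertion that the set of good $F$ is the set of $A$-points of a nonempty Zariski-open subset of the coefficient space over $A$, ``cut out by a nonzero polynomial over $A$,'' is unsubstantiated; in the paper the good locus is $\Sp_A^{-1}(U)$ for a dense open $U\subset\mathbb{P}^{d-1}(k)$, i.e.\ an openness condition on the reductions of the coefficients, and establishing even that requires the finiteness statements (finitely many bad primes for basicness, for $(S_2)$, and for flatness) that your sketch assumes. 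Until the mixed-characteristic Bertini input is actually supplied, the argument is circular.
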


The proof of the theorem is obtained as an immediate corollary of the\textit{ Bertini type theorem} for normal schemes in mixed characteristic. The hypothesis that $A$ is \textit{unramified} is forced in the proof of the local Bertini theorem in \cite{OS15}, where we needed to estimate the bound of the number of minimal generators of the (completed) module of K$\mathrm{\ddot{a}}$hler differentials over an unramified complete discrete valuation ring (see Proposition \ref{symbolic} below). However, we believe that the theorem holds true for an arbitrary discrete valuation ring with infinite residue field
(see also Remark \ref{normalCartier2}). Let $X \subset \mathbb{P}^n_K$ be a projective variety over an algebraically closed field $K$. Let $\mathbf{P}$ be one of the following properties: smooth, normal or reduced. Assume that $X$ has $\mathbf{P}$. Then is it true that the scheme theoretic intersection $X \cap H$ has the same property for a generic hyperplane $H \subset \mathbb{P}^n_K$? It is a classical theorem of Bertini that the generic hyperplane section of a smooth projective variety $X \subset \mathbb{P}^n_K$ is smooth. It is also a classical result of Seidenberg \cite{S50} that, if $X \subset \mathbb{P}^n_K$ is a normal connected variety, then $X \cap H$ is a normal connected variety for a generic hyperplane $H \subset \mathbb{P}^n_K$. Now let $X$ be a scheme such that there is a flat surjective morphism of finite type $X \to S$ and $S$ is a Dedekind scheme of mixed characteristic.\footnote{$X \to S$ is usually called an \textit{arithmetic scheme}. However, we will not use this terminology in the article, because it seems that different authors use it in slightly different settings.} The main reason that we assume $X \to S$ to be surjective is that we want to avoid to include the following case: $X=\Spec \mathbb{Q}_p \to S=\Spec \mathbb{Z}_p$ and $\mathbb{Q}_p \simeq \mathbb{Z}_p[X]/(pX-1)$. We prove the \textit{global Bertini theorem} when $X$ is a normal connected scheme (see Theorem \ref{mainBertini} and Corollary \ref{corBertini} when $X$ is a regular connected scheme).

\begin{theorem}
Let $X$ be a normal connected projective scheme over $\Spec A$ such that $X \to \Spec A$ is flat and surjective, where $(A,\pi_A,k)$ is an unramified discrete valuation ring of mixed characteristic $p>0$. Assume that $\dim X \ge 2$, the generic fiber of $X \to \Spec A$ is geometrically connected and $k$ is an infinite field. 

Then there exists an embedding $X \hookrightarrow \mathbb{P}^n_A$, together with a Zariski-dense open subset $U \subset \mathbb{P}^n(k)^{\vee}$, where $\mathbb{P}^n(k)^{\vee}$ is the dual projective space of $\mathbb{P}^n(k)$, such that the scheme theoretic intersection $X \cap H$ is normal and flat over $A$ for any $H \in \Sp_A^{-1}(U)$, where $\Sp_A:\mathbb{P}^n(A)^{\vee} \to \mathbb{P}^n(k)^{\vee}$ is the specialization map (see Definition \ref{spmap}).
\end{theorem}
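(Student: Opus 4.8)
The plan is to deduce the global (projective) Bertini statement from the local/affine version, which is referenced as Corollary~\ref{normalCartier}, together with Seidenberg's classical normality-under-generic-hyperplane theorem applied fiberwise. The key geometric input is Serre's criterion: a scheme is normal iff it satisfies $(R_1)$ and $(S_2)$. So I would first reduce the normality of $X\cap H$ to two separate conditions and handle them by rather different means.

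\medskip

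\noindent\textbf{Step 1: Setup and flatness.} Fix any closed immersion $X\hookrightarrow\mathbb{P}^n_A$ arising from a very ample line bundle; we will shrink among hyperplanes and possibly re-embed (via a Veronese) at the end. For a hyperplane $H$ corresponding to a point of $\mathbb{P}^n(A)^\vee$, write $D=X\cap H$. Flatness of $D\to\Spec A$ is the easy part: since $A$ is a discrete valuation ring, $D\to\Spec A$ is flat iff $\pi_A$ is a nonzerodivisor on $\mathcal{O}_D$, equivalently $D$ has no associated prime contained in the special fiber $X_0=X\otimes_A k$. Because $X$ is normal hence $X_0$ is nowhere dense in $X$ (as $X\to\Spec A$ is surjective and flat, $X_0$ is a divisor), a generic $H$ will not contain any component of $X_0$, and more care with embedded behavior shows $D$ is flat for $H$ in a dense open of the special dual space; this is a standard openness/genericity argument and should already be packaged in the affine statement.

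\medskip

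\noindent\textbf{Step 2: Normality via the affine main theorem.} The scheme $X$ is covered by finitely many affine opens $X=\bigcup_i X_i$, and the complement $\mathbb{P}^n_A\setminus V(x_j)$ of a coordinate hyperplane gives an affine chart. The crucial point is that $X\cap H$, restricted to a suitable affine chart, is an effective Cartier divisor on an affine normal scheme of finite type and flat over $A$ — precisely the object Theorem~1 / Corollary~\ref{normalCartier} produces in infinite supply. More precisely, I would argue that for $H$ outside a proper closed subset of $\mathbb{P}^n(k)^\vee$ (using that $k$ is infinite so $\mathbb{P}^n(k)^\vee$ is Zariski-dense in the scheme of hyperplanes), the hyperplane section $X\cap H$ is, Zariski-locally on $X$, one of the normal divisors given by the affine Bertini theorem; gluing the local normality (normality is local) gives that $X\cap H$ is normal. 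The passage from ``infinitely many normal divisors exist'' to ``a dense open set of hyperplanes works'' is exactly the content of the local Bertini theorem behind Corollary~\ref{normalCartier}: the good locus $U$ in the dual special fiber is where the generic-generation and symbolic-power estimates (Proposition~\ref{symbolic}) hold, which is open and nonempty.

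\medskip

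\noindent\textbf{Step 3: Connectedness and the specialization map.} For the generic fiber $X_\eta\subset\mathbb{P}^n_K$ (with $K=\Frac A$), Seidenberg's theorem gives that $X_\eta\cap H$ is normal and connected for generic $H$ over $K$; since $\dim X_\eta=\dim X-1\ge 1$ this is nonempty and the geometric connectedness hypothesis on the generic fiber propagates to the hyperplane section (Bertini connectedness, e.g.\ via the irreducibility of the incidence variety). Then I would invoke the specialization map $\Sp_A:\mathbb{P}^n(A)^\vee\to\mathbb{P}^n(k)^\vee$ from Definition~\ref{spmap}: any $k$-point in the good open $U$ lifts, by Hensel's lemma / smoothness of projective space over the henselian-enough base (here just a DVR, so an $A$-point always reduces to any given $k$-point), to an $A$-point, and the corresponding hyperplane $H$ over $A$ has special fiber $H_0$ in the normal-and-flat locus and generic fiber $H_\eta$ in Seidenberg's locus — the latter because the Seidenberg locus is dense open over $K$ and the fiber of $\Sp_A$ over a point of $U$ is a non-empty open subscheme of an affine space over $K$, hence meets it. Intersecting the two (finitely many) constraints keeps $U$ dense open in $\mathbb{P}^n(k)^\vee$.

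\medskip

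\noindent\textbf{Main obstacle.} The delicate point is not any single normality check but the bookkeeping that ties together the two ``generic'' conditions living on different bases: the affine normality condition naturally lives over the residue field $k$ (that is where the local Bertini bound is proved), while Seidenberg's input lives over the fraction field $K$. Making the open set $U\subset\mathbb{P}^n(k)^\vee$ do double duty — simultaneously controlling the special fiber directly and the generic fiber through a lift along $\Sp_A$ — requires knowing that the fibers of $\Sp_A$ are large enough (Zariski-dense in affine space over $K$) to always meet the Seidenberg open set; this is where the hypothesis that $A$ is a DVR (so $\mathbb{P}^n(A)^\vee\to\mathbb{P}^n(k)^\vee$ is surjective with affine-space fibers) is essential. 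A secondary technical nuisance is globalizing the affine statement across charts of $\mathbb{P}^n_A$ compatibly, i.e.\ ensuring a single hyperplane $H$ works on all charts of $X$ at once; this is handled by taking the finite intersection of the chart-wise dense opens, possibly after replacing the embedding by a high Veronese re-embedding so that hyperplanes separate enough to realize every chart-local divisor from Corollary~\ref{normalCartier}.
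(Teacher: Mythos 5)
Your proposal inverts the paper's logical order and, in doing so, leaves the central difficulty unresolved. In the paper, Corollary \ref{normalCartier} (the affine statement you want to use as input in Step 2) is \emph{deduced from} Theorem \ref{mainBertini} by compactifying and normalizing; so invoking it here is circular. Setting the circularity aside, the real gap is in your Step 2: the local Bertini theorem of \cite{OS15} and Proposition \ref{symbolic} are statements about a single \emph{complete local} domain, producing a dense open set of directions in $\mathbb{P}^{d-1}(k)$ good for all (but finitely many) primes of that one local ring. A chart-by-chart argument on $X$ would require one such open set for each point of $X$, i.e.\ an intersection of infinitely many dense opens, which need not be dense or even nonempty. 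The idea that makes the paper's proof work, and which is absent from your proposal, is the affine cone construction: Lemma \ref{normalgraded} produces a graded normal domain $R$ with $R_0=A$ and $X\simeq\Proj R$, and Proposition \ref{Cone} gives a flat surjective morphism $C^0_+(R)\to X$ from the punctured spectrum of the single local ring $R_{\fm}$ at the vertex. Normality of $X\cap H_\alpha$ is thereby equivalent to normality of $R_{\fm}/\mathbf{x}_{\widetilde{\alpha}}R_{\fm}$ along the relevant locus, and after completing one is exactly in the setting where Proposition \ref{symbolic} (for $(R_1)$, via symbolic powers) and the finiteness of the sets $Q_1,Q_2$ (for $(S_2)$, via Flenner's lemma and excellence) apply. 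This is also where the geometric connectedness of the generic fiber is genuinely used: it forces $H^0(X,\mathcal{O}_X)=A$, hence $R_0=A$, so that $R_{\fm}^{\wedge}$ has the \emph{unramified} coefficient ring $A^{\wedge}$ required by Proposition \ref{symbolic} (see Remark \ref{normalCartier2}(2) for what goes wrong otherwise). Your Step 3 uses that hypothesis only for connectedness of hyperplane sections of the generic fiber, which is not part of the conclusion and is not how the hypothesis enters; likewise Seidenberg's theorem plays no role in the paper's proof of this theorem (it is used only later, in Corollary \ref{RavindraSrinivas}).

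Your Step 1 (flatness via $\pi_A$ being a nonzerodivisor on $\mathcal{O}_D$, granted that no component of the special fiber is contained in a generic $H$) is essentially fine and consistent with what the paper does implicitly. But without the cone reduction, the normality part of your argument does not close.
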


A crucial ingredient for the proof the above theorem is Lemma \ref{normalgraded}; a construction of certain Noetherian graded normal domains. As an application, we prove a mixed characteristic version of a result of Ravindra and Srinivas on Weil divisor class groups on normal schemes in the final section (see Corollary \ref{RavindraSrinivas}). We intend that the present paper provides the users with basic tools to study the connection between the Bertini-type problem for local (graded) rings and the Bertini-type problem for projective schemes.

\section{Notation and conventions}

All rings are commutative and unitary.  We refer the reader to \cite{BrHer93} and \cite{M86} as standard references for commutative algebra. A \textit{local ring} is a Noetherian ring with a unique maximal ideal. Let $R$ be a Noetherian ring. Let us denote by $\Reg(R)$ the regular locus of $\Spec R$ and denote by $\Nor(R)$ the normal locus of $\Spec R$. The symbol $(R,\fm,k)$ will denote a local ring such that $k=R/\fm$ is the residue field. Let $(A,\pi_A,k)$ be a discrete valuation ring of mixed characteristic $p>0$ such that $\pi_A$ is a uniformizer of $A$ and $p \notin \pi_A^2A$. Such a discrete valuation ring $A$ is called \textit{unramified}. A ring map $(A,\pi_A,k) \to (R,\fm,k)$ of complete local rings is called a \textit{coefficient ring map}, if it is a local flat map, $A$ is an unramified complete discrete valuation ring of mixed characteristic and $k=A/\pi_A A \simeq R/\fm$. Assume that both $(R,\fm,k)$ and $(A,\pi_A,k)$ are complete local rings. Let $\Omega_{R/A}$ denote the module of K$\mathrm{\ddot{a}}$hler differentials of $R$ over $A$. It is known that $\Omega_{R/A}$ is not a finitely generated $R$-module when $\dim R \ge 2$, for which we refer the reader to \cite[Examples 5.5 (a)]{K86}.\footnote{The main point is that when $R$ is a domain, then the transcendence degree of the field extension $\Frac(A) \to \Frac(R)$ is infinite and apply \cite[Corollary 5.3]{K86}.} Let $\widehat{\Omega}_{R/A}$ be the $\fm$-adic completion of $\Omega_{R/A}$. Then $\widehat{\Omega}_{R/A}$ is a finitely generated $R$-module (see \cite[Proposition 20.7.15]{Gr64} for the proof of this fact). The book \cite{K86} is a good source for differential modules. In the present article, the completed module $\widehat{\Omega}_{R/A}$ will play a vital role. Let $R$ be a ring and $I$ be its ideal. Then let $V(I)$ denote the set of all primes of $R$ containing the ideal $I$. Let us put $D(I):=\Spec R \setminus V(I)$. Let $(R,\fm,k)$ be a local ring. Then we say that the elements $x_1,\ldots,x_n$ in $R$ are \textit{minimal generators} of $\fm$, if they span the $k=R/\fm$-vector space $\fm/\fm^2$.
 
We follow \cite{GW10} for the notation in algebraic geometry. When we speak of a divisor, it always means a \textit{Cartier divisor}. A Weil divisor is used only when we discuss the \textit{Weil divisor class group} on normal schemes (see \cite[Chapter 11]{GW10} for divisors and Weil divisor class groups).

\section{Localization of an affine cone attached to a projective scheme}

In this section, we recall basic theory on Noetherian graded rings and introduce some geometric method using the \textit{affine cone} of projective schemes. The notion of affine cones will be necessary to relate the global Bertini theorem to the local Bertini theorem in mixed characteristic as proved in \cite{OS15}. A general reference for graded rings is \cite{BrHer93}. Let $R=\bigoplus_{n \in \mathbb{Z}} R_n$ be a $\mathbb{Z}$-graded Noetherian ring with a prime ideal $\fp$. Let $\fp^*$ be the homogeneous ideal of $R$ which is generated by homogeneous elements contained in $\fp$. It is known that $\fp^*$ is a prime ideal (see \cite[Lemma 1.5.6]{BrHer93} for the proof of this fact). We denote by $R_{(\fp)}$ the \textit{homogeneous localization} of $R$ with respect to $\fp$. That is, the $n$-th graded piece of $R_{(\fp)}$ is given  by
$$
(R_{(\fp)})_n=\Big\{\frac{a}{b} \in R_{(\fp)}~\Big|~a~\mbox{and}~b~\mbox{are homogeneous,}~\deg a-\deg b=n\Big\}.
$$
Then $R_{(\fp)}$ is a subring of $R_{\fp}$. We consider the following condition on graded rings.

\begin{enumerate}
\item[$(\mathbf{St})$]
Let $R=\bigoplus_{n \ge 0} R_n$ be a positively graded Noetherian ring such that $R$ is generated by $R_1$ over $R_0$, where ($R_0,\fm_0,k)$ is a local ring with the maximal ideal $\fm_0$.
\end{enumerate}

A graded ring satisfying the condition $(\mathbf{St})$ is a special case of \textit{standard graded rings}. In practice, such a graded ring arises as the homogeneous coordinate ring of a projective scheme. Let $X \subset \mathbb{P}^n_{R_0}=\Proj \big(R_0[X_0,\ldots,X_n]\big)$ be a closed subscheme over $\Spec R_0$. Then there is a homogeneous ideal $I$ such that $X \simeq \Proj R$, where $R=R_0[X_0,\ldots,X_n]/I$ (see \cite[Proposition 13.24]{GW10}). Then $R$ satisfies $(\mathbf{St})$. We put $\fm:=\fm_0 \oplus \bigoplus_{n>0}R_n$. Since $R_0$ is a local ring, $\fm$ is the unique homogeneous maximal ideal of $R$. We also let $\fm_+:=\bigoplus_{n>0} R_n$, which is the \textit{irrelevant ideal} of $R$. We define the \textit{affine cone} by $C(R):=\Spec R$. Its \textit{pointed affine cone} is defined by $C^0(R):=C(R) \setminus i(V)$, where $i:V=\Spec R_0 \hookrightarrow C(R)$ is the closed immersion induced by $R \twoheadrightarrow R/\fm_+=R_0$. Let $R_{\fm}$ be the localization of $R$ at the maximal ideal $\fm$. Then we define the \textit{localized pointed affine cone} by $C^0_+(R):=\Spec R_{\fm} \setminus j(V)$, where $j:V \to \Spec R_{\fm}$ is induced by $R_{\fm} \twoheadrightarrow R_{\fm}/\fm_+ R_{\fm}=R_0$. For a homogeneous ideal $I \subset R$, we set $D_+(I)$ to be the set of all primes $\fp \in \Proj R$ such that $\fp$ does not contain $I$. The following proposition is found in  \cite[Proposition 13.37]{GW10}. However, we give its proof for the convenience of the readers.

\begin{proposition}
\label{Cone}
Let the notation be as above. Then there is a sequence of morphisms of schemes:
$$
C_+^0(R) \xrightarrow{\psi} C^0(R) \xrightarrow{\phi} X.
$$
The following properties hold:
\begin{enumerate}
\item
$\phi \circ \psi$ is a flat surjective morphism with smooth fibers of relative dimension one.

\item
Assume that $\mathbf{P}$ is Serre's condition $(R_n)$ or $(S_n)$. Then $C_+^0(R)$ has $\mathbf{P}$ if and only if so does $X$.
\end{enumerate}
\end{proposition}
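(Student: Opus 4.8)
The plan is to build $\phi$ and $\psi$ on the standard affine charts of $\Proj R$ and then read off both assertions from the behaviour of Serre's conditions under flat base change.

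\emph{The morphisms and (1).} Because $R$ satisfies $(\mathbf{St})$, the scheme $X=\Proj R$ is covered by the affine opens $D_+(f)=\Spec R_{(f)}$, $f\in R_1$, and I would define $\phi$ by setting $C^0(R)\cap\phi^{-1}(D_+(f))=D(f)=\Spec R_f$ with $\phi|_{D(f)}$ equal to $\Spec$ of the inclusion $R_{(f)}\hookrightarrow R_f$. The crucial elementary point is the identity $R_f=R_{(f)}[f,f^{-1}]$: since $f$ is a homogeneous unit of degree one in the $\mathbb{Z}$-graded ring $R_f$, comparing degrees shows $f$ is algebraically independent over $R_{(f)}$, so $\phi|_{D(f)}$ is the projection $\mathbb{G}_m\times D_+(f)\to D_+(f)$. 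These glue to a morphism $\phi\colon C^0(R)\to X$ that is flat and surjective with all fibers $\cong\mathbb{G}_m$, hence smooth of relative dimension one. I would take $\psi$ to be the restriction to $C^0_+(R)$ of the canonical morphism $\Spec R_\fm\to\Spec R$; it is flat, being a localization. Finally $\phi\circ\psi$ is surjective: each point of $X$ is $\phi(\fq)$ for the homogeneous prime $\fq$ it corresponds to, and a homogeneous prime with $\fm_+\not\subseteq\fq$ satisfies $\fq\subseteq\fm$ (its degree-zero component lies in $\fm_0$), so $\fq$ is a point of $\Spec R_\fm$ lying in $C^0_+(R)$ with $\psi(\fq)=\fq$. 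Hence $\phi\circ\psi$ is flat and surjective, which gives (1).

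\emph{Assertion (2).} Since $(R_n)$ and $(S_n)$ are conditions on local rings, I would just match stalks. The stalks of $C^0_+(R)$ are the rings $R_\fp$ with $\fp\subseteq\fm$ and $\fm_+\not\subseteq\fp$; the stalks of $X$ are the homogeneous localizations $R_{(\fq)}$ with $\fq$ homogeneous and $\fm_+\not\subseteq\fq$, and any such $\fq$ lies in $\fm$. For a prime $\fp$ of $R$ with homogeneous core $\fp^*$ (prime by \cite[Lemma 1.5.6]{BrHer93}), choosing $f\in R_1\setminus\fp^*$ and using $R_f=R_{(f)}[f,f^{-1}]$ exhibits a flat local homomorphism $R_{(\fp^*)}\to R_\fp$ with regular fibers --- explicitly, $R_\fp$ is a localization of $R_{(\fp^*)}[T]$ --- so that $R_\fp$ has $(R_n)$, resp.\ $(S_n)$, if and only if $R_{(\fp^*)}$ does; in the $(R_n)$ case I would control heights by passing to primes minimal over the relevant extended ideals and using that the core never raises the height of a local ring. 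As $\fp$ ranges over $\{\fp\subseteq\fm,\ \fm_+\not\subseteq\fp\}$, its core $\fp^*$ ranges over all homogeneous primes not containing $\fm_+$; therefore ``$C^0_+(R)$ has $\mathbf{P}$'' $\Leftrightarrow$ ``each such $R_\fp$ has $\mathbf{P}$'' $\Leftrightarrow$ ``each $R_{(\fp^*)}$ has $\mathbf{P}$'' $\Leftrightarrow$ ``each stalk $R_{(\fq)}$ of $X$ has $\mathbf{P}$'' $\Leftrightarrow$ ``$X$ has $\mathbf{P}$''. (One can equally first pass through $C^0(R)$, using that $\phi$ is faithfully flat with regular fibers to get ``$C^0(R)$ has $\mathbf{P}$'' $\Leftrightarrow$ ``$X$ has $\mathbf{P}$'' and then compare $C^0_+(R)$ with $C^0(R)$ by the same core computation.)

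\emph{Expected main obstacle.} The hard part is the comparison $R_\fp\leftrightarrow R_{(\fp^*)}$ and the bookkeeping that lets one transport a Serre condition from the smaller, localized cone $C^0_+(R)$ back onto $X$: one must relate a non-homogeneous prime of $R$ to its homogeneous core, produce the flat local homomorphism $R_{(\fp^*)}\to R_\fp$ with regular fibers out of the chart identity $R_f\cong R_{(f)}[f,f^{-1}]$, and apply the standard --- but somewhat fussy --- ascent and descent of $(R_n)$ and $(S_n)$ along flat local maps with regular fibers, keeping an eye on heights in the $(R_n)$ case. Everything else (the chart computation, flatness of $\psi$, surjectivity of $\phi\circ\psi$) is routine.
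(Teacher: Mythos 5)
Your proposal is correct and follows essentially the same route as the paper: both construct $\phi$ chart-by-chart from the identification $R_f\cong (R_f)_0[T,T^{-1}]$ for $f\in R_1$, take $\psi$ to be the localization at $\fm$, and deduce (2) from ascent/descent of Serre's conditions along a flat local map with regular fibers (the paper simply cites \cite[Theorem 23.9]{M86} for the composite $\phi\circ\psi$, where you unwind the same statement by hand via homogeneous cores $\fp^*$). The only cosmetic point is that in (1) you should note that the fibers of the composite $\phi\circ\psi$, being localizations of open subschemes of the $\mathbb{G}_m$-fibers of $\phi$, remain smooth of relative dimension one.
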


\begin{proof}
Since $R$ is generated by $R_1$ over $R_0$ by our assumption, there exists a finite set of elements $f_1,\ldots,f_s \in R_1$ such that $X=\bigcup_{i=1}^n D_+(f_i)$ and since each $R_{f_i}$ has an invertible element $f_i$ of degree 1, there is an isomorphism:
$$
(R_{f_i})_0[T,T^{-1}] \cong R_{f_i}~(T \mapsto f_i).
$$
Using this description, since $\phi$ is locally induced by the natural inclusion $(R_{f_i})_0 \hookrightarrow R_{f_i}$ and $\psi$ is induced by the localization map $R \to R_{\fm}$, it follows that $\phi \circ \psi$ is flat.  Moreover, for any prime ideal $\fp \subset (R_{f_i})_0$, the extension $\fp R_{f_i}$ is also prime. Hence $\phi$ is surjective. Every point $ \fp \in X$ is contained in $\fm$, but not containing $\fm_+$ and so we find that $\phi \circ \psi$ is surjective. On the other hand, we have
$$
\phi^{-1}(D_+(f_i))=D_+(f_i) \times_{\Spec R} \Spec \big(R[T,T^{-1}]\big),
$$
where $R[T]$ is a polynomial algebra over $R$ with a variable $T$. That is, the fiber of $\phi$ at the point $\fp \in X$ is the punctured affine line $\mathbb{A}^1_{k(\fp)} \setminus \{0\}=\Spec \big(k(\fp)[T]\big) \setminus \{0\}$, where $k(\fp)=(R_{(\fp)})_0/\fp_0 (R_{(\fp)})_0$. Moreover, we have $(\phi \circ \psi)^{-1}(\fp) \simeq \Spec \big(k(\fp)[T]_{(T)}\big) \setminus \{0\}$, which is a smooth scheme over $\Spec k(\fp)$. We have thus proved the assertion $\rm(1)$. For the assertion $\rm(2)$, it suffices to apply \cite[Theorem 23.9]{M86}.
\end{proof}

\begin{example}
Let $k$ be a field and consider $X=\mathbb{P}^n_k=\Proj \big(k[X_0,\ldots,X_n]\big)$. Then we have $C^0(R)=\mathbb{A}^{n+1}_k \setminus \{(0,\ldots,0)\}$, where $(0,\ldots,0)$ is the origin of $\mathbb{A}^{n+1}_k$. Then $\mathbb{A}^{n+1}_k \setminus \{(0,\ldots,0)\} \to \mathbb{P}^n_k$ gives a construction of the projective space as a set of lines through the origin of $\mathbb{A}^{n+1}_k$. Moreover, we have $C_+^0(R)=\Spec \big(k[X_0,\ldots,X_n]_{(X_0,\ldots,X_n)}\big) \setminus \{0,\ldots,0\}$.
\end{example}

\section{Basic elements and symbolic power ideals}

For the proof of the main theorem of the article, let us prepare notation from \cite{OS15}. Especially, the notion of \textit{basic elements} plays an important role.

\begin{definition}
Let $R$ be a Noetherian ring and let $M$ be a (not necessarily finitely generated) $R$-module. Then we say that an element $m \in M$ is \textit{basic} at a prime ideal $\fp$ of $R$, if the image of $m$ under the map $M \to M_{\fp}/\fp M_{\fp}$ is not zero (in particular, $M_{\fp} \ne 0$). 
\end{definition}

We will also need the derivation for modules.

\begin{definition}
Let $R$ be a ring and let $M$ be an $R$-module. A set-theoretic map $d:R \to M$ is called a \textit{derivation}, if the following equalities are satisfied: $d(a+b)=da+db$ and $d(ab)=a(db)+b(da)$ for any elements $a,b \in R$. 
\end{definition}

The importance of basic elements is expressed by the following lemma (see \cite[Lemma 2.2]{Fl77} for the proof).

\begin{lemma}
\label{basiclemma}
Let $d:R \to M$ be a derivation and let $a \in R$. Assume that $da \in M$ is basic at a prime $\fp$ of $R$. Then we have $a \notin \fp^{(2)}$, where $\fp^{(n)}:=\fp^n R_{\fp} \cap R$ is the $n$-th symbolic power ideal of $\fp$.
\end{lemma}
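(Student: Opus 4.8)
The plan is to prove the contrapositive: assuming $a \in \fp^{(2)}$, I will show that the image of $da$ in $M_\fp/\fp M_\fp$ is zero, so that $da$ fails to be basic at $\fp$. The only external input needed is that derivations are compatible with localization.

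First I would recall that the derivation $d\colon R \to M$ extends uniquely to a derivation $d_\fp\colon R_\fp \to M_\fp$ determined by the quotient rule $d_\fp(a/s)=\big(s\cdot da-a\cdot ds\big)/s^2$ for $a \in R$ and $s \in R \setminus \fp$, and that the square formed by $d$, $d_\fp$ and the two localization maps $R \to R_\fp$ and $M \to M_\fp$ commutes. In particular, writing $\iota\colon R \to R_\fp$ for the canonical map, the image of $da$ in $M_\fp$ is exactly $d_\fp(\iota(a))$, so $da$ is basic at $\fp$ if and only if $d_\fp(\iota(a)) \notin \fp M_\fp$, where $\fp M_\fp$ denotes $(\fp R_\fp)M_\fp$.

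Now suppose $a \in \fp^{(2)} = \fp^2 R_\fp \cap R$. Then $\iota(a) \in \fp^2 R_\fp$, so we may write $\iota(a)=\sum_{i=1}^r x_i y_i$ with all $x_i, y_i \in \fp R_\fp$. Applying $d_\fp$ and using additivity together with the Leibniz rule gives
\[
d_\fp(\iota(a)) = \sum_{i=1}^r \big(x_i\, d_\fp(y_i)+y_i\, d_\fp(x_i)\big) \in (\fp R_\fp) M_\fp = \fp M_\fp,
\]
so the image of $da$ in $M_\fp/\fp M_\fp$ vanishes, contradicting the hypothesis that $da$ is basic at $\fp$. Hence $a \notin \fp^{(2)}$.

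I do not expect any serious obstacle: the content is entirely the Leibniz-rule computation displayed above, and the only step requiring a word of justification is the construction of the localized derivation $d_\fp$ and its compatibility with $d$, which is standard (it is precisely how $\Omega_{R/A}$ behaves under localization). Note that no finiteness hypothesis on $M$ is used anywhere.
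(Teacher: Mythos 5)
Your argument is correct, and it is essentially the standard proof: the paper itself does not spell out a proof but defers to Flenner [Fl77, Lemma~2.2], whose argument is exactly this localization-plus-Leibniz computation on $\fp^2 R_\fp$. The only point needing care, the existence and compatibility of the induced derivation $d_\fp\colon R_\fp \to M_\fp$, is handled adequately (one can also obtain it from the universal property of localization applied to the composite derivation $R \to M \to M_\fp$ into the $R_\fp$-module $M_\fp$).
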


We will need the lemma in the case that $M$ is the (completed) module of K$\mathrm{\ddot{a}}$hler differentials and $d$ is the canonical derivation. Let $(A,\pi_A,k)$ be a discrete valuation ring of mixed characteristic $p>0$. Let $\mathbb{P}^n(A)$ denote the $n$-dimensional projective space with coordinates in $A$ such that its $A$-rational point $(\alpha_0:\cdots:\alpha_n) \in \mathbb{P}^n(A)$ is \textit{normalized}, which means that $\pi_A \nmid \alpha_i$ for some $0 \le i \le n$. By this convention, we have $(\alpha_0:\cdots:\alpha_n)=(\beta_0:\cdots:\beta_n)$ in $\mathbb{P}^n(A)$ if and only if the equality $\beta_i=v \alpha_i$ holds for some $v \in A^{\times}$ and all $0 \le i \le n$. We notice that $\mathbb{P}^n(A)$ may be identified naturally with $\mathbb{P}^n(K)$ as sets, where $K$ is the field of fractions of $A$.

\begin{definition}[Specialization map]
\label{spmap}
Let us define the set-theoretic map $\Sp_A:\mathbb{P}^n(A) \to \mathbb{P}^n(k)$ in the following way. Let us pick a point $\alpha=(\alpha_0:\dots:\alpha_n) \in \mathbb{P}^n(A)$  with its lift $\widetilde{\alpha}=(\widetilde{\alpha}_0,\ldots,\widetilde{\alpha}_n) \in A^{n+1} \setminus \{0,\ldots,0\}$. Then we define
$$
\Sp_A(\alpha):=(\overline{\alpha}_0:\cdots:\overline{\alpha}_n) \in \mathbb{P}^n(k),
$$
where we put $\overline{\alpha}_i:=\widetilde{\alpha}_i \pmod {\pi_A}$. 
\end{definition}

Every point of $\mathbb{P}^n(A)$ is normalized and it is easy to check that this map is independent of the lift of $\alpha=(\alpha_0:\cdots:\alpha_n)$. Thus, the specialization map is well defined. Let $(R,\fm,k)$ be a Noetherian local $A$-algebra and fix a system of elements $x_0,\ldots,x_n$ in the maximal ideal $\fm$ and let us choose a point $\alpha=(\alpha_0:\cdots:\alpha_n) \in \mathbb{P}^n(A)$. Let us put
$$
\mathbf{x}_{\widetilde{\alpha}}:=\sum_{i=0}^n \widetilde{\alpha}_i x_i,
$$
where $\widetilde{\alpha}=(\widetilde{\alpha}_0,\ldots,\widetilde{\alpha}_n) \in A^{n+1} \setminus \{0,\ldots,0\}$ is a lift of $\alpha=(\alpha_0:\cdots:\alpha_n) \in \mathbb{P}^n(A)$ through the quotient map $A^{n+1} \setminus \{0,\ldots,0\} \to \mathbb{P}^n(A)$. The principal ideal $\mathbf{x}_{\widetilde{\alpha}}R$ does not depend on the lift of $\alpha \in \mathbb{P}^n(A)$.

The following lemma is a modification of \cite[Lemma 4.2]{OS15}, which plays a role in the proof of the Bertini theorem. The proof given in \cite[Lemma 4.2]{OS15} applies without any essential change.

\begin{lemma}
\label{Avoidance}
Let $(R,\fm,k)$ be a complete Noetherian local domain of mixed characteristic $p > 0$ with infinite residue field $k$ and a coefficient ring $(A,\pi_A,k)$. Fix a set of elements $x_0,\ldots,x_n$ in the maximal ideal $\fm$, together with a prime ideal $\fp$ of $R$ such that $(x_0,\ldots,x_n) \not\subset \fp$. Then there exists a Zariski-dense open subset $U \subset \mathbb{P}^n(k)$ such that $\mathbf{x}_{\widetilde{\alpha}} \notin \fp$ for every point $\alpha=(\alpha_0:\cdots:\alpha_n) \in \Sp_A^{-1}(U)$.
\end{lemma}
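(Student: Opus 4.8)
The plan is to reduce the assertion to an elementary generic-linear-combination argument carried out in the quotient $B := R/\fp$. Write $\bar{x}_i \in B$ for the image of $x_i$. For a normalized lift $\widetilde{\alpha} = (\widetilde{\alpha}_0, \dots, \widetilde{\alpha}_n) \in A^{n+1} \setminus \{0, \dots, 0\}$ of a point $\alpha \in \mathbb{P}^n(A)$, the membership $\mathbf{x}_{\widetilde{\alpha}} \in \fp$ is equivalent to $\sum_{i=0}^{n} \widetilde{\alpha}_i \bar{x}_i = 0$ in $B$, and — as already observed for the principal ideal $\mathbf{x}_{\widetilde{\alpha}}R$ — it does not depend on the choice of (normalized) lift. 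So I would introduce the $A$-linear map $\phi \colon A^{n+1} \to B$ sending $(a_0, \dots, a_n)$ to $\sum_i a_i \bar{x}_i$, call a point $\alpha \in \mathbb{P}^n(A)$ \emph{bad} when $\widetilde{\alpha} \in \ker\phi$ (equivalently when $\mathbf{x}_{\widetilde{\alpha}} \in \fp$), and look for a Zariski-dense open $U \subseteq \mathbb{P}^n(k)$ such that no bad point lies in $\Sp_A^{-1}(U)$.

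The next step is to linearize modulo $\pi_A$. Let $W \subseteq k^{n+1} = A^{n+1}/\pi_A A^{n+1}$ be the image of $\ker\phi$. If $\alpha$ is bad, then its normalized lift $\widetilde{\alpha}$ lies in $\ker\phi$, so its reduction $\overline{\widetilde{\alpha}} \in k^{n+1}$ lies in $W$; moreover $\overline{\widetilde{\alpha}} \neq 0$ precisely because $\widetilde{\alpha}$ is normalized, so $\Sp_A(\alpha) = (\overline{\widetilde{\alpha}}_0 : \cdots : \overline{\widetilde{\alpha}}_n)$ defines a point of the closed linear subvariety $\mathbb{P}(W) \subseteq \mathbb{P}^n(k)$. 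Hence $\Sp_A$ maps the set of bad points into $\mathbb{P}(W)$, and it suffices to show that $W$ is a proper subspace of $k^{n+1}$. For this I would invoke Nakayama's lemma: if $W = k^{n+1}$ then $\ker\phi + \pi_A A^{n+1} = A^{n+1}$, so the finitely generated $A$-module $A^{n+1}/\ker\phi$ equals $\pi_A \cdot (A^{n+1}/\ker\phi)$ and therefore vanishes; thus $\phi = 0$, i.e.\ $\bar{x}_i = 0$ for all $i$, i.e.\ $(x_0, \dots, x_n) \subseteq \fp$, against the hypothesis. Therefore $W \subsetneq k^{n+1}$, and $U := \mathbb{P}^n(k) \setminus \mathbb{P}(W)$ is a Zariski-dense open subset (using that $k$ is infinite and $\mathbb{P}^n(k)$ is irreducible): for every $\alpha \in \Sp_A^{-1}(U)$ the point $\alpha$ is not bad, that is, $\mathbf{x}_{\widetilde{\alpha}} \notin \fp$, as required.

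I do not expect a genuine obstacle here; this is the generic-linear-combination argument, which is exactly why the authors remark that the proof in \cite[Lemma 4.2]{OS15} applies without essential change. The only points deserving attention are the bookkeeping that membership of $\mathbf{x}_{\widetilde{\alpha}}$ in $\fp$ is unaffected by the choice of normalized lift, the observation that a normalized lift of a bad point reduces modulo $\pi_A$ to a \emph{nonzero} vector of $W$ (so that $\Sp_A$ of the bad locus lies genuinely inside the closed set $\mathbb{P}(W)$, not merely in its closure), and the correct target of the Nakayama step, namely $A^{n+1}/\ker\phi$ — equivalently the finitely generated submodule $N := \sum_i A\bar{x}_i \subseteq B$, for which $W = k^{n+1}$ would force $N = \pi_A N$ and hence $N = 0$. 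Note that neither completeness of $R$ nor the assumption that $R$ is a domain plays any role in this particular argument.
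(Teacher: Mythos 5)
Your argument is correct, and it is essentially the argument the paper has in mind: the paper gives no proof here but defers to \cite[Lemma 4.2]{OS15}, whose proof is exactly this reduction to showing, via Nakayama's lemma over $A$, that the specializations of the ``bad'' points lie in a proper linear subspace $\mathbb{P}(W) \subsetneq \mathbb{P}^n(k)$. Your bookkeeping on normalized lifts and on the nonvanishing of $\overline{\widetilde{\alpha}}$ is the right level of care, and your closing observation that neither completeness nor the domain hypothesis is used is accurate.
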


We shall prove the following proposition which slightly generalizes \cite[Theorem 4.3]{OS15} in the form we need. We explain the meaning of the conclusion after the proof is finished.

\begin{proposition}
\label{symbolic}
Let $(R,\fm,k)$ be a complete Noetherian local domain of mixed characteristic $p > 0$ and assume the following conditions:

\begin{enumerate}
\item[\rm{(i)}]
Let $A \to R$ be a coefficient ring map, where $(A,\pi_A,k)$ is an unramified complete discrete valuation ring.

\item[\rm{(ii)}]
Let $\pi_A,x_1,\ldots,x_d$ be a fixed set of minimal generators of $\fm$, which in particular implies that $\pi_A \notin \fm^2$.

\item[\rm{(iii)}]
The residue field $k$ is infinite.
\end{enumerate}
Then there exist only finitely many prime ideals $\{\fq_1,\ldots,\fq_n\}$ of $R$ such that the ideal $(x_1,\ldots,x_d)$ is contained in every $\fq_i$ with $i=1,\ldots,n$. Let us put
$$
\mathbf{x}_{\widetilde{\alpha}}=\sum_{i=1}^d \widetilde{\alpha}_i x_i~\mbox{for}~\alpha=(\alpha_1:\cdots:\alpha_d) \in \mathbb{P}^{d-1}(A).
$$
Then, there is a Zariski-dense open subset $U \subset \mathbb{P}^{d-1}(k)$ such that $\mathbf{x}_{\widetilde{\alpha}} \notin \fp^{(2)}$ holds for every $\fp \in \Spec R \setminus \{\fq_1,\ldots,\fq_n\}$ and $\alpha \in \Sp^{-1}_A(U)$.
\end{proposition}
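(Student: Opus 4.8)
The plan is to dispose of the finiteness claim by an elementary dimension count, then to reduce the symbolic‑power statement to one about basic elements of $\widehat{\Omega}_{R/A}$ via Lemma \ref{basiclemma}, and finally to run the genericity argument of \cite{OS15}. For the finiteness: since $\pi_A,x_1,\dots,x_d$ generate $\fm$, the maximal ideal of the local ring $R/(x_1,\dots,x_d)$ is generated by the single element $\overline{\pi_A}$, so $\dim R/(x_1,\dots,x_d)\le 1$ by Krull's principal ideal theorem. A Noetherian local ring of dimension at most one has only finitely many prime ideals, each of them being either the maximal ideal or one of the finitely many minimal primes. Hence $V(x_1,\dots,x_d)=\{\fq_1,\dots,\fq_n\}$ is finite; writing $I=(x_1,\dots,x_d)$ we have $\Spec R\setminus\{\fq_1,\dots,\fq_n\}=D(I)$.

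Next I would set $M:=\widehat{\Omega}_{R/A}$, which is a finitely generated $R$-module by \cite[Proposition 20.7.15]{Gr64}, with canonical derivation $d\colon R\to M$. Since $\pi_A,x_1,\dots,x_d$ generate $\fm$, $d\pi_A=0$, and the images $\overline{dx_1},\dots,\overline{dx_d}$ form a basis of $M\otimes_R k$ (which is isomorphic to $\fm/(\fm^2+\pi_A R)$ via $d$; this is a standard computation for complete local rings admitting a coefficient ring, compare \cite{K86}), Nakayama's lemma shows that $M$ is generated by $dx_1,\dots,dx_d$. For $\alpha\in\mathbb{P}^{d-1}(A)$ one has $d\mathbf{x}_{\widetilde{\alpha}}=\sum_{i=1}^d\widetilde{\alpha}_i\,dx_i$ because $d\widetilde{\alpha}_i=0$. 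By Lemma \ref{basiclemma}, if $d\mathbf{x}_{\widetilde{\alpha}}$ is basic at a prime $\fp$ then $\mathbf{x}_{\widetilde{\alpha}}\notin\fp^{(2)}$, and of course $\mathbf{x}_{\widetilde{\alpha}}\notin\fp$ also gives $\mathbf{x}_{\widetilde{\alpha}}\notin\fp^{(2)}$. So the proposition reduces to producing a Zariski-dense open $U\subseteq\mathbb{P}^{d-1}(k)$ such that, for every $\alpha\in\Sp_A^{-1}(U)$ and every $\fp\in D(I)$, either $\mathbf{x}_{\widetilde{\alpha}}\notin\fp$, or $d\mathbf{x}_{\widetilde{\alpha}}$ is basic at $\fp$ in $M$.

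The construction of $U$ follows \cite[Theorem 4.3]{OS15} with only the bookkeeping change that the exceptional set is now exactly $V(I)$. One stratifies $\Spec R$ into finitely many locally closed subsets on which $M$ has locally constant rank — possible since $M$ is finitely generated over the Noetherian ring $R$, using the Fitting ideals of $M$ (the non-locally-free locus being a proper closed subset) — and argues by Noetherian induction on closed subsets. At the generic point $\fp_0$ of an irreducible piece: if $\fp_0\supseteq I$, the whole piece lies in $V(I)$ and nothing is required; otherwise $\fp_0\in D(I)$, and the single-prime avoidance argument of \cite[Lemma 4.2]{OS15} (see Lemma \ref{Avoidance}) — applied to the condition ``$\mathbf{x}_{\widetilde{\alpha}}\notin\fp_0$'' when $\fp_0\notin\Supp M$, and to ``$d\mathbf{x}_{\widetilde{\alpha}}$ basic at $\fp_0$'' when $\fp_0\in\Supp M$, the latter being available precisely because $\overline{dx_1},\dots,\overline{dx_d}$ do not all vanish in $M\otimes k(\fp_0)$ (as $M$ is generated by them) — produces a dense open $U_0\subseteq\mathbb{P}^{d-1}(k)$ over which the required property holds at $\fp_0$. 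Because a normalized lift $\widetilde{\alpha}$ has at least one coordinate a unit of $A$, hence a nonzero element in every residue field $k(\fp)$, one then propagates the property from $\fp_0$ to a neighborhood of $\fp_0$ inside the piece, confining the failure locus to a proper closed subset of $\Spec R$ independent of $\alpha$, to which the inductive hypothesis applies. Intersecting the finitely many dense opens so produced yields $U$.

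I expect the genuine obstacle to be this last propagation step: a priori the set of primes at which $d\mathbf{x}_{\widetilde{\alpha}}$ fails to be basic moves with $\alpha$, and the whole point of combining the specialization map with the minimality of the generators $\pi_A,x_1,\dots,x_d$ and the unramifiedness of $A$ — which is what makes $\widehat{\Omega}_{R/A}$ finitely generated with a controlled number of generators — is to pin this locus down uniformly in $\alpha$, exactly as in \cite{OS15}; the finiteness established in the first step is then what guarantees that the exceptional set in the statement is finite.
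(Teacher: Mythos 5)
Your proposal is correct and follows essentially the same route as the paper: both reduce the symbolic-power claim to basicness of $d\mathbf{x}_{\widetilde{\alpha}}$ in $\widehat{\Omega}_{R/A}$ (which is generated by $dx_1,\ldots,dx_d$ because $d\pi_A=0$) via Lemma \ref{basiclemma}, dispose of the finitely many primes containing $(x_1,\ldots,x_d)$ by a dimension/height count, handle the remaining finitely many exceptional primes by the avoidance Lemma \ref{Avoidance}, and defer the uniform-in-$\alpha$ genericity statement to the proof of \cite[Theorem 4.3]{OS15}. The only cosmetic difference is that you run the argument directly in $\mathbb{P}^{d-1}$, whereas the paper invokes \cite{OS15} in $\mathbb{P}^{d}$ with the full generating set $\pi_A,x_1,\ldots,x_d$ and then intersects the resulting dense open subset with the hyperplane $X_0=0$.
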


\begin{proof}
For the first assertion, we observe that $\Ht(x_1,\ldots,x_d) \ge \dim R-1$ in view of the equality $\Ht(\pi_A,x_1,\ldots,x_d)=\dim R$. This implies that the ideal $(x_1,\ldots,x_d)$ is contained in only finitely many primes $\fq_1,\ldots,\fq_n$ and $\fm \in \{\fq_1,\ldots,\fq_n\}$.

Let us prove the second assertion. Let $\widehat{\Omega}_{R/A}$ be the $\fm$-adic completion of the module of K$\mathrm{\ddot{a}}$hler differentials of the $A$-algebra $R$. Since $\pi_A \in A$, we have $d\pi_A=0$ in $\widehat{\Omega}_{R/A}$, where $d:R \to \widehat{\Omega}_{R/A}$ is the universal derivation. So it follows that $dx_1,\ldots,dx_d$ forms a set of generators of the $R$-module $\widehat{\Omega}_{R/A}$. To prove the second assertion, it is necessary to modify the proof of \cite[Theorem 4.3]{OS15} as follows. Let us put $x_0:=\pi_A$ and let $R[X_0,\ldots,X_d]$ be a polynomial algebra over $R$. Then by the proof of \cite[Theorem 4.3]{OS15}, there exist finitely many prime ideals $\{\fp_1,\ldots,\fp_s\}$ of $R$, together with a polynomial 
$$
G \in R[X_0,\ldots,X_d] \setminus \fm R[X_0,\ldots,X_d]
$$
such that for any point $\widetilde{\alpha}=(\widetilde{\alpha}_0,\ldots,\widetilde{\alpha}_d) \in A^{d+1}$ with $G(\widetilde{\alpha}) \notin \fm$, we have
$$
\sum_{i=0}^d \widetilde{\alpha}_i dx_i \in \widehat{\Omega}_{R/A}~\mbox{is basic at every}~\fp \in \Spec R \setminus \{\fp_1,\ldots,\fp_s\}.
$$
Let $\{\fq_1,\ldots,\fq_n\}$ be the set of primes of $R$ as in the first assertion. After organizing the set of the primes $\{\fp_1,\ldots,\fp_s\}$ in an appropriate order, we may assume that there exists an integer $r$ such that $0 \le r \le s$ and $\{\fp_1,\ldots,\fp_r\}=\{\fp_1,\ldots,\fp_s\} \setminus \{\fq_1,\ldots,\fq_n\}$. Then we have $(x_1,\ldots,x_d) \not\subset \fp$ when $\fp \in \{\fp_1,\ldots,\fp_r\}$, and $(x_1,\ldots,x_d) \subset \fp$ when $\fp \in \{\fp_{r+1},\ldots,\fp_s\}$. 

By proceeding as in the second part of the proof of \cite[Theorem 4.3]{OS15} and applying Lemma \ref{Avoidance} to the prime ideals $\{\fp_1,\ldots,\fp_r\}$, we can find a Zariski-dense open subset $U' \subset \mathbb{P}^{d}(k)$ ($U'$ is determined by $G \in R[X_0,\ldots,X_d]$) such that for a given point $\alpha=(\alpha_0:\cdots:\alpha_d) \in \Sp^{-1}_A(U')$, the following assertion holds:
\begin{equation}
\label{crucial}
\sum_{i=1}^d \widetilde{\alpha}_i dx_i \in \widehat{\Omega}_{R/A}~\mbox{is basic at every}~\fp \in \Spec R \setminus \{\fp_1,\ldots,\fp_s\}
\end{equation}
$$
\mbox{and}~\mathbf{x}_{\widetilde{\alpha}}=\sum_{i=1}^d \widetilde{\alpha}_i x_i \notin \fp~\mbox{for every}~\fp \in \{\fp_1,\ldots,\fp_r\}
$$
(notice that $dx_0=d\pi_A=0$). Then applying Lemma \ref{basiclemma} to $(\ref{crucial})$, we deduce the following assertion:
\begin{equation}
\label{crucial2}
\mathbf{x}_{\widetilde{\alpha}}=\sum_{i=1}^{d} \widetilde{\alpha}_i x_i \notin \fp^{(2)}~\mbox{for every}~\fp \in \Spec R \setminus \{\fp_{r+1},\ldots,\fp_s\}
\end{equation}
$$
\mbox{and every}~\alpha=(\alpha_0:\cdots:\alpha_d) \in \Sp^{-1}_A(U').
$$
Let us identify the hyperplane $H \subset \mathbb{P}^d(k)$, which is defined by the homogeneous equation $X_0=0$ in $k[X_0,\ldots,X_d]$, with the projective space $\mathbb{P}^{d-1}(k)$. Then we have an open subset $U:=U' \cap H$ of $H \cong \mathbb{P}^{d-1}(k)$, which is not empty by the construction. Since we have $\{\fp_{r+1},\ldots,\fp_s\} \subset \{\fq_1,\ldots,\fq_n\}$, it follows that $\Spec R \setminus \{\fq_1,\ldots,\fq_n\} \subset \Spec R \setminus \{\fp_{r+1},\ldots,\fp_s\}$ and the second assertion follows from $(\ref{crucial2})$.
\end{proof}

\begin{remark}
\label{symbolic2}
Let us explain the meaning of non-containment in the second symbolic power ideal $\fp^{(2)}$. Let $R$ be a Noetherian ring and fix a prime ideal $\fp$ of $R$ such that
$R_{\fp}$ is regular. Choose an element $x \in \fp$. Then we can prove that the localization of $R/xR$ at $\fp$ is regular if $x \notin \fp^{(2)}$. Indeed, the condition $x \notin \fp^{(2)}$ implies that $x$ is a minimal generator of the maximal ideal of $R_{\fp}$. Since $R_{\fp}$ is regular, $R_{\fp}/xR_{\fp}$ is also regular. Thus, Proposition \ref{symbolic} asserts that the inclusion $\Reg(R) \cap V(\mathbf{x}_{\widetilde{\alpha}} R) \subset \Reg(R/\mathbf{x}_{\widetilde{\alpha}}R)$ holds outside finitely many points $\fq_1,\ldots,\fq_n$ of $\Spec R$.
\end{remark}

\section{Proof of the main theorem}

We need the following lemma. We refer the reader to \cite[Corollaire (2.9.4)]{Gr61} for the proof and its generalized version.

\begin{lemma}
\label{Gro}
Assume that $A$ is a Noetherian ring and let $X \hookrightarrow \mathbb{P}_A^N$ be a closed subscheme with its ideal sheaf $\mathcal{I}_X$, and let $\Gamma_*(\mathcal{I}_X):=\bigoplus_{n \ge 0}H^0(\mathbb{P}_A^N,\mathcal{I}_X(n))$ be the homogeneous ideal of the polynomial algebra $\Gamma_*(\mathcal{O}_{\mathbb{P}^N_A})$. Then $X$ is an integral scheme if and only if $\Gamma_*(\mathcal{I}_X)$ is a prime ideal of $\Gamma_*(\mathcal{O}_{\mathbb{P}^N_A})$.
\end{lemma}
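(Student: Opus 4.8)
The plan is to reduce the statement to an assertion about the graded ring $B := S/I$, where $S := \Gamma_*(\mathcal{O}_{\mathbb{P}^N_A}) = A[X_0,\ldots,X_N]$ and $I := \Gamma_*(\mathcal{I}_X)$; since a homogeneous ideal of $S$ is prime exactly when its quotient ring is a domain, it is enough to prove that $X$ is integral if and only if $B$ is a domain. I would first record the link between $B$ and cohomology on $X$. Twisting the structure sequence $0 \to \mathcal{I}_X \to \mathcal{O}_{\mathbb{P}^N_A} \to i_*\mathcal{O}_X \to 0$ (with $i \colon X \hookrightarrow \mathbb{P}^N_A$) by $\mathcal{O}(n)$, applying the projection formula $i_*\mathcal{O}_X \otimes \mathcal{O}(n) \cong i_*\mathcal{O}_X(n)$, and taking global sections yields, for every $n \ge 0$, an exact sequence
\[
0 \to H^0(\mathbb{P}^N_A,\mathcal{I}_X(n)) \to S_n \to H^0(X,\mathcal{O}_X(n)).
\]
Hence $I_n = H^0(\mathbb{P}^N_A,\mathcal{I}_X(n))$ is precisely the kernel of $S_n \to H^0(X,\mathcal{O}_X(n))$, so that $B$ is realized as a graded subring of $\Gamma_*(\mathcal{O}_X) := \bigoplus_{n \ge 0} H^0(X,\mathcal{O}_X(n))$.

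For the implication ``$I$ prime $\Rightarrow X$ integral'', I would assume $B$ is a graded domain, so in particular $I \ne S$. The point that needs attention is nonemptiness of $X$: if the irrelevant ideal $S_+ = (X_0,\ldots,X_N)$ were contained in $I$, then $1 \in (I : S_+)$ and hence the $S_+$-saturation $\bigcup_n (I : S_+^n)$ of $I$ would be all of $S$; but $I = \Gamma_*(\mathcal{I}_X)$ is $S_+$-saturated, contradicting $I \ne S$. Thus $B_+ \ne 0$, so the zero ideal of $B$ lies in $\Proj B = X$ and is its unique generic point; in particular $X$ is irreducible. Reducedness is checked locally: on a standard affine open $D_+(\bar f) \subseteq X$, with $\bar f$ the image of some $X_i$, the ring $\mathcal{O}_X(D_+(\bar f))$ is the degree-zero subring of the localization $B_{\bar f}$, which is a domain, hence reduced; such opens cover $X$. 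Therefore $X$ is integral.

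For the converse ``$X$ integral $\Rightarrow I$ prime'', by the embedding above it suffices to show that $\Gamma_*(\mathcal{O}_X)$ is a domain. Let $\eta$ be the generic point of $X$ and $K := \mathcal{O}_{X,\eta}$ its function field. I would fix a trivialization $\mathcal{O}_X(1)_\eta \cong K$ and pass to the induced compatible trivializations $\mathcal{O}_X(n)_\eta \cong K$ via tensor powers. Since $X$ is integral, for any invertible sheaf $\mathcal{L}$ on $X$ the restriction $H^0(X,\mathcal{L}) \to \mathcal{L}_\eta$ is injective: a global section vanishing at $\eta$ vanishes on a dense open, hence everywhere, because $\mathcal{O}_X(U) \hookrightarrow K$ for every nonempty open $U$ of the integral scheme $X$. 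Applying this to $\mathcal{L} = \mathcal{O}_X(n)$ and assembling over $n$, I obtain a graded ring homomorphism
\[
\Gamma_*(\mathcal{O}_X) \hookrightarrow K[t,t^{-1}], \qquad H^0(X,\mathcal{O}_X(n)) \ni s \longmapsto s_\eta\, t^n,
\]
which is injective by the above and multiplicative because the trivializations were chosen compatibly with tensor products. As $K[t,t^{-1}]$ is a domain, so is $\Gamma_*(\mathcal{O}_X)$, hence so is its subring $B$; that is, $I$ is prime.

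I expect the converse direction to be the main obstacle: one must choose the trivializations $\mathcal{O}_X(n)_\eta \cong K$ compatibly with tensor products so that $s \mapsto s_\eta\, t^n$ is genuinely a ring homomorphism, and one must extract the injectivity of $H^0(X,\mathcal{L}) \to \mathcal{L}_\eta$ from integrality alone, with no smoothness or normality available. A smaller but essential subtlety in the forward direction is that one really uses that $I$ is \emph{saturated}, not merely proper, to exclude $X = \emptyset$; this is precisely why the statement is phrased with $\Gamma_*(\mathcal{I}_X)$ in place of an arbitrary defining ideal. As the authors remark, this lemma is also \cite[Corollaire (2.9.4)]{Gr61}, which one could simply cite in lieu of the argument above.
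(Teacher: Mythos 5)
The paper does not actually prove this lemma: it simply defers to \cite[Corollaire (2.9.4)]{Gr61}, as you note at the end of your write-up. Your argument is therefore necessarily a different route, and it is correct and self-contained. Both directions check out: the reduction to ``$B=S/I$ is a domain iff $X$ is integral'' via the left-exact sequence $0 \to H^0(\mathcal{I}_X(n)) \to S_n \to H^0(X,\mathcal{O}_X(n))$ is sound; the use of saturatedness of $\Gamma_*(\mathcal{I}_X)$ to rule out $S_+ \subseteq I$ (and hence $X=\emptyset$) is exactly the right point to isolate, since a prime such as $S_+$ itself would otherwise be a counterexample; and the embedding $\Gamma_*(\mathcal{O}_X)\hookrightarrow K[t,t^{-1}]$ via compatible trivializations at the generic point correctly handles the converse, with the injectivity of $H^0(X,\mathcal{L})\to\mathcal{L}_\eta$ following from $\mathcal{O}_X(U)\hookrightarrow K$ on an integral scheme as you say. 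The only steps left tacit are standard ones you are entitled to: that $\widetilde{\Gamma_*(\mathcal{I}_X)}=\mathcal{I}_X$, so that $X\simeq\Proj B$ (this is what makes $\Proj B=X$ and $\mathcal{O}_X(D_+(\bar f))=(B_{\bar f})_0$ legitimate in the forward direction), and that $I$ is saturated, which you could justify in one line by dividing by a local generator of $\mathcal{O}(1)$. What your approach buys over the citation is an elementary, verifiable argument using nothing beyond Hartshorne-level facts about $\Proj$ and $\Gamma_*$; what the citation buys is brevity and the ``generalized version'' (arbitrary base, arbitrary very ample sheaf) that EGA records.
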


\begin{lemma}
\label{normalgraded}
Let $X$ be a normal connected projective scheme over $\Spec A$ such that $X \to \Spec A$ is flat and surjective, where $A$ is a Noetherian local normal domain. Suppose that one of the following conditions holds:
\begin{enumerate}
\item
The field of fractions of $A$ is of characteristic 0 and the generic fiber of $X \to \Spec A$ is geometrically connected.

\item
The generic fiber of $X \to \Spec A$ is geometrically integral.
\end{enumerate}
Then there exists a Noetherian graded normal domain $R=\bigoplus_{n \ge 0} R_n$ such that $R$ satisfies the condition $(\mathbf{St})$, $R_0=A$ and $X \simeq \Proj R$.
\end{lemma}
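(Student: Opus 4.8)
The plan is to construct $R$ as the section ring of $X$ with respect to a suitable very ample line bundle, and to verify that the candidate ring is a normal domain satisfying $(\mathbf{St})$ with $R_0 = A$.

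\medskip

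\noindent\textbf{Step 1: Choosing the embedding and the candidate ring.} Since $X \to \Spec A$ is projective, fix a closed immersion $X \hookrightarrow \mathbb{P}^N_A$ and let $\mathcal{O}_X(1)$ be the restriction of $\mathcal{O}_{\mathbb{P}^N_A}(1)$. Set $R := \bigoplus_{n \ge 0} H^0(X, \mathcal{O}_X(n))$. The natural map $S := A[X_0, \ldots, X_N] = \Gamma_*(\mathcal{O}_{\mathbb{P}^N_A}) \to R$ need not be surjective; to repair this, I would replace $\mathcal{O}_X(1)$ by a sufficiently high Veronese twist $\mathcal{O}_X(d)$ so that the section ring is generated in degree one (this is standard: for $d \gg 0$ the multiplication maps $H^0(\mathcal{O}_X(d))^{\otimes n} \to H^0(\mathcal{O}_X(dn))$ are surjective, e.g. by \cite[Theorem 13.?]{GW10} or Serre vanishing, using that $A$ is Noetherian and $X$ is projective over $\Spec A$). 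After this replacement, re-embed $X$ by the complete linear system $|\mathcal{O}_X(d)|$ into some $\mathbb{P}^n_A$; then $R = S/I$ with $I = \Gamma_*(\mathcal{I}_X)$, so $R$ automatically satisfies $(\mathbf{St})$ with $R_0 = H^0(X, \mathcal{O}_X) $, and $X \simeq \Proj R$ by \cite[Proposition 13.24]{GW10}.

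\medskip

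\noindent\textbf{Step 2: Identifying $R_0$ with $A$.} I must show $H^0(X, \mathcal{O}_X) = A$. Since $X \to \Spec A$ is proper and flat with $A$ Noetherian local, $H^0(X, \mathcal{O}_X)$ is a finite $A$-algebra, and $A \to H^0(X, \mathcal{O}_X)$ is injective by surjectivity of $X \to \Spec A$ together with flatness. The generic fiber $X_K$ (where $K = \Frac A$) is geometrically connected and geometrically reduced — in case (1) because $K$ has characteristic $0$ so geometric connectedness of an integral scheme gives geometric integrality, and in case (2) by hypothesis — hence $H^0(X_K, \mathcal{O}_{X_K}) = K$. By flat base change $H^0(X, \mathcal{O}_X) \otimes_A K = H^0(X_K, \mathcal{O}_{X_K}) = K$, so $H^0(X, \mathcal{O}_X)$ is a finite $A$-algebra contained in $K$; since $A$ is integrally closed in $K$ (it is a normal domain) and a finite $A$-subalgebra of $K$ is integral over $A$, we conclude $H^0(X, \mathcal{O}_X) = A$.

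\medskip

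\noindent\textbf{Step 3: $R$ is a normal domain.} That $R$ is a domain follows from Lemma \ref{Gro}: $X$ is integral (it is normal and connected, hence irreducible and reduced), so $I = \Gamma_*(\mathcal{I}_X)$ is prime and $R = S/I$ is a domain. For normality I would use Serre's criterion $(R_1) + (S_2)$. Apply Proposition \ref{Cone}: the localized pointed affine cone $C^0_+(R)$ satisfies $(R_1)$ (resp. $(S_2)$) if and only if $X$ does, and $X$ is normal, hence satisfies both. It remains to pass from $C^0_+(R) = \Spec R_{\fm} \setminus j(V)$ to $R$ itself. The only primes of $R$ not seen in $C^0_+(R)$ are those not contained in $\fm = \fm_0 \oplus \fm_+$ and the primes containing $\fm_+$; since $R_0 = A$ is normal and $\fm_0$-local, and $\fm$ is the unique homogeneous maximal ideal, one checks that normality of $R_{\fm}$ away from $V(\fm_+)$, normality of $R_0 = A$, and the structure of $R$ near $\fm_+$ (where $R/\fm_+ = A$ and $R_{\fm_+}$ localizes to a field, since $\fm_+$ has height... actually one argues via the grading that $\fm_+$ is prime with $R_{\fm_+}$ a localization of $R_{(\fm_+)}[t] = A_?[t]$) together force $R$ to be normal; more cleanly, $R$ is normal iff $R_{\fm}$ is normal (normality can be checked at maximal ideals of $R$, and every maximal ideal of a graded ring with $R_0$ local is $\fm$ up to the grading, or is a contraction handled by $R_0 = A$ normal), and $R_\fm$ is normal because $\Spec R_\fm \setminus j(V) = C^0_+(R)$ is normal while $j(V) = \Spec A$ is normal and has codimension $\ge 2$ in $\Spec R_\fm$ when $\dim X \ge 1$, so $(S_2)$ and $(R_1)$ for $R_\fm$ follow by excision.

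\medskip

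\noindent\textbf{Main obstacle.} The delicate point is Step 3, specifically transferring normality from the \emph{localized pointed} cone $C^0_+(R)$ (which is what Proposition \ref{Cone} controls) to the full affine cone $\Spec R$: one must rule out bad behaviour along the vertex section $V = \Spec A$ and at the irrelevant ideal, using that $A = R_0$ is itself normal and a careful codimension/depth count in the cone; handling the case where the generic fiber is merely geometrically connected (not integral) in characteristic $0$ — i.e. deducing geometric reducedness for free — is the subtle part of Step 2 but is genuinely automatic over a characteristic-$0$ field.
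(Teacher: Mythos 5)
Your Steps 1 and 2 follow the same route as the paper: the candidate ring is the section ring $\Gamma_*(\mathcal{O}_X)=\bigoplus_{n\ge 0}H^0(X,\mathcal{O}_X(n))$, made standard graded by passing to a Veronese subring (the paper does this at the end, via \cite[Proposition 3 at page 159]{Bour} and the fact that a direct summand of a normal domain is normal), and $R_0=A$ is identified exactly as you do, via $H^0(X_K,\mathcal{O}_{X_K})=K$, flat base change, module-finiteness, and the integral closedness of $A$ in $K$. The genuine gap is in Step 3. Serre's condition $(S_2)$ does \emph{not} ``follow by excision'': Proposition \ref{Cone} only controls the primes lying in $C^0_+(R)$, whereas $(S_2)$ is a condition on \emph{every} prime of $R_{\fm}$, including those in the removed locus $V(\fm_+R_{\fm})$, and the depth inequality at those primes is not implied by $(S_2)$ on the complement together with a codimension bound. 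The assertion you are implicitly using --- that a cone which is normal off a normal closed subscheme of codimension at least $2$ is normal --- is false: for any normal projective variety embedded non--projectively--normally (e.g.\ a general smooth rational quartic curve in $\mathbb{P}^3$), the homogeneous coordinate ring is regular away from the irrelevant maximal ideal, which has codimension $2$, yet it fails $(S_2)$, hence normality, at that point. So a codimension count cannot suffice; the indispensable input is the equality $R=\Gamma_*(\mathcal{O}_X)$, which you arranged in Step 1 but never invoke in Step 3. (A smaller issue: the codimension of $V(\fm_+)$ is governed by the relative dimension $\dim X_K$, not by $\dim X$, and the lemma carries no dimension hypothesis at all.)

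The repair is the paper's argument, which is direct: if $s/f$ lies in the fraction field, with $s\in H^0(X,\mathcal{O}_X(l))$, $f\in H^0(X,\mathcal{O}_X(m))$, $f\ne 0$, and is integral over $\Gamma_*(\mathcal{O}_X)$, then $l\ge m$, and since each local ring $\mathcal{O}_{X,x}$ is normal one gets $(s/f)_x\in\mathcal{O}_X(l-m)_x$ for every $x\in X$, hence $s/f\in H^0(X,\mathcal{O}_X(l-m))$. Equivalently, in your cone language: $\Gamma\big(C^0(R),\mathcal{O}\big)=\bigoplus_{n\ge 0}H^0(X,\mathcal{O}_X(n))=R$, so $R=\bigcap_{\fp\in C^0(R)}R_{\fp}$ is an intersection, inside its common fraction field, of the normal local rings supplied by Proposition \ref{Cone}, and is therefore integrally closed. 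Either formulation replaces the invalid excision step; the rest of your proposal then goes through.
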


\begin{proof}
Since $X$ is a Noetherian normal connected scheme, it is an integral scheme. Fix an embedding $X \hookrightarrow \mathbb{P}_A^N$ as a closed subscheme with its ideal sheaf $\mathcal{I}_X$. By Lemma \ref{Gro}, $S:=\Gamma_*(\mathcal{O}_{\mathbb{P}^N_A})/\Gamma_*(\mathcal{I}_X)$ is a Noetherian graded domain satisfying $(\mathbf{St})$ such that $X \simeq \Proj S$. Let us put $R':=\Gamma_*(\mathcal{O}_X)=\bigoplus_{n \ge 0} H^0(X,\mathcal{O}_X(n))$. Then by the short exact sequence $0 \to \mathcal{I}_X \to \mathcal{O}_{\mathbb{P}^N_A} \to \mathcal{O}_X \to 0$, we have an injection $S \hookrightarrow R'$. By the definition of $R'$, it is contained in the field of fractions $L$ of $S$, and $R'$ is a finitely generated graded $S$-module. So $R'$ is Noetherian. Consider $\frac{s}{f} \in L$, where $s \in H^0(X,\mathcal{O}_X(l))$ and $f \in H^0(X,\mathcal{O}_X(m))$ with $f \ne 0$. Assume that $\frac{s}{f}$ is integral over $R'$. Then it can be shown that $l \ge m$ by looking at the monic polynomial defining $\frac{s}{f}$. Since $\mathcal{O}_{X,x}$ is a normal domain for $x \in X$, it follows that
$$
\Big(\frac{s}{f}\Big)_x \in \mathcal{O}_X(l-m)_x,
$$
which implies that $\frac{s}{f} \in H^0(X,\mathcal{O}_X(l-m))$. Hence $R'$ is a normal domain.

Next, we prove that $A=R'_0$, where $R'_0=H^0(X,\mathcal{O}_X)$. Denote by $X_K$ the generic fiber of $X \to \Spec A$ with $K:=\Frac(A)$. Then $X_K$ is a normal connected projective variety by assumption. First, assume the condition $(1)$.
Since $K$ is a field of characteristic zero, $X_{\overline{K}}:=X_K \times \Spec \overline{K}$ is a normal projective scheme defined over the algebraic closure $\overline{K}$ of $K$. Since $X_K$ is geometrically connected, $X_{\overline{K}}$ is a normal integral projective variety. Second, assume the condition $(2)$. Then $X_{\overline{K}}$ is an integral projective variety. In any case, we have
\begin{equation}
\label{connected1}
H^0(X_{\overline{K}},\mathcal{O}_{X_{\overline{K}}})=\overline{K}.
\end{equation}
Now $K':=H^0(X_K,\mathcal{O}_{X_K})$ is a finite field extension of $K$. By flat base change, we have
\begin{equation}
\label{connected2}
H^0(X_{\overline{K}},\mathcal{O}_{X_{\overline{K}}}) \simeq K' \otimes_K \overline{K}.
\end{equation}
Combining $(\ref{connected1})$ and $(\ref{connected2})$ together, we obtain $
\overline{K} \simeq K' \otimes_K \overline{K}$.
As this isomorphism holds only when $K'=K$, we have
$$
H^0(X_K,\mathcal{O}_{X_K})=K.
$$
Since there are inclusions $A \hookrightarrow H^0(X,\mathcal{O}_X) \hookrightarrow H^0(X_K,\mathcal{O}_{X_K})=K$, $A \hookrightarrow H^0(X,\mathcal{O}_X)$ is module-finite and $A$ is integrally closed in $K$, we have
$$
A=R'_0=H^0(X,\mathcal{O}_X).
$$
Recall that the direct summand of a normal domain is normal. So after replacing $R'$ by its appropriate Veronese subring $R$, it follows from \cite[Proposition 3 at page 159]{Bour} that there is a Noetherian graded normal domain $R$ satisfying $(\mathbf{St})$, $R_0=A$ and $X \simeq \Proj R$.
\end{proof}

Let us give a proof of the main theorem. Let $A$ be a discrete valuation ring and let $H$ be an element of the dual projective space $\mathbb{P}^n(A)^{\vee}$. Then we may identify $H$ as a hyperplane of $\mathbb{P}^n_A$ as a closed subscheme.

\begin{theorem}
\label{mainBertini}
Let $X$ be a normal connected projective scheme over $\Spec A$ such that $X \to \Spec A$ is flat and surjective, where $(A,\pi_A,k)$ is an unramified discrete valuation ring of mixed characteristic $p>0$. Assume that $\dim X \ge 2$, the generic fiber of $X \to \Spec A$ is geometrically connected and $k$ is an infinite field. 

Then there exists an embedding $X \hookrightarrow \mathbb{P}^n_A$, together with a Zariski-dense open subset $U \subset \mathbb{P}^n(k)^{\vee}$, where $\mathbb{P}^n(k)^{\vee}$ is the dual projective space of $\mathbb{P}^n(k)$, such that the scheme theoretic intersection $X \cap H$ is normal and flat over $A$ for any $H \in \Sp_A^{-1}(U)$, where $\Sp_A:\mathbb{P}^n(A)^{\vee} \to \mathbb{P}^n(k)^{\vee}$ is the specialization map (see Definition \ref{spmap}).
\end{theorem}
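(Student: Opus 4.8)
The plan is to pass from $X$ to the \textit{affine cone} construction of Section 3, apply the local Bertini theorem of Section 4 at the homogeneous maximal ideal, and translate back, using Serre's criterion ($R$ normal $\iff$ $R$ satisfies $(R_1)$ and $(S_2)$) and Proposition \ref{Cone}(2) as the dictionary.

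First I would invoke Lemma \ref{normalgraded}: since $A$ is unramified of mixed characteristic, $\Frac(A)$ has characteristic $0$, so case $(1)$ applies and $X\simeq\Proj R$ for a Noetherian graded normal domain $R=\bigoplus_{n\ge0}R_n$ satisfying $(\mathbf{St})$ with $R_0=A$. As $X\to\Spec A$ is flat, each $R_n$ is a finitely generated torsion-free, hence free, $A$-module; a basis $y_0,\ldots,y_n$ of $R_1$ gives a closed immersion $X\hookrightarrow\mathbb{P}^n_A$, and for $\alpha=(\alpha_0:\cdots:\alpha_n)\in\mathbb{P}^n(A)^{\vee}$ with lift $\widetilde\alpha$ it identifies the hyperplane section with $X\cap H\simeq\Proj(R/\mathbf x_{\widetilde\alpha}R)$, where $\mathbf x_{\widetilde\alpha}=\sum_{i=0}^n\widetilde\alpha_iy_i\in R_1$. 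Since $R$ is a domain and the $y_i$ are $A$-independent, $\mathbf x_{\widetilde\alpha}$ is a nonzerodivisor, so $R/\mathbf x_{\widetilde\alpha}R$ again satisfies $(\mathbf{St})$ over $A$. Passing from $A$ to $\widehat A$ is harmless — it preserves the hypotheses, by flat base change and $H^0(X,\mathcal O_X)=A$, and both normality and flatness of the section descend along the faithfully flat, geometrically regular map $A\to\widehat A$ — so I may assume $A$ complete, hence excellent; then $\widehat{R_{\fm}}$, the completion at the homogeneous maximal ideal $\fm=\fm_0\oplus\bigoplus_{n>0}R_n$, is a complete Noetherian local normal domain with coefficient ring map $A\to\widehat{R_{\fm}}$, and $\pi_A,y_0,\ldots,y_n$ is a system of minimal generators of its maximal ideal.

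By Proposition \ref{Cone}(2), $X\cap H$ is normal if and only if $C_+^0(R/\mathbf x_{\widetilde\alpha}R)$ satisfies $(R_1)$ and $(S_2)$, and — by faithfully flat descent along $R_{\fm}\to\widehat{R_{\fm}}$ — if and only if $\widehat{R_{\fm}}/\mathbf x_{\widetilde\alpha}\widehat{R_{\fm}}$ satisfies these two conditions at every prime off the vertex $V(\fm_+)$, i.e. off the finitely many primes of $\widehat{R_\fm}$ containing $(y_0,\ldots,y_n)$. For $(R_1)$ I would apply Proposition \ref{symbolic} with $d=n+1$ and the $x_i$ equal to the $y_j$: it furnishes a Zariski-dense open $U_0\subseteq\mathbb{P}^n(k)^{\vee}$ such that, for $\alpha\in\Sp_A^{-1}(U_0)$, one has $\mathbf x_{\widetilde\alpha}\notin\fp^{(2)}$ for every such $\fp$; by Remark \ref{symbolic2} the section is then regular at every prime off the vertex at which $\widehat{R_{\fm}}$ is regular. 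As $X$ is normal, $\widehat{R_{\fm}}$ is normal off the vertex (again Proposition \ref{Cone}(2)), so its singular locus there has codimension $\ge 2$; shrinking $U_0$ by one application of Lemma \ref{Avoidance} to the finitely many codimension-two generic points of $\Sing(\widehat{R_\fm})$ makes $\mathbf x_{\widetilde\alpha}$ miss them, hence cut the singular locus properly, and $\widehat{R_{\fm}}/\mathbf x_{\widetilde\alpha}\widehat{R_{\fm}}$ then satisfies $(R_1)$ off the vertex.

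For $(S_2)$: $\widehat{R_{\fm}}$ is normal, hence $(S_2)$, and $\mathbf x_{\widetilde\alpha}$ is a nonzerodivisor, so $\widehat{R_{\fm}}/\mathbf x_{\widetilde\alpha}\widehat{R_{\fm}}$ is at least $(S_1)$; moreover the non-Cohen--Macaulay locus of $\widehat{R_\fm}$ off the vertex — which by Proposition \ref{Cone}(2) matches that of $X$ — has codimension $\ge 3$, and a Bertini-type argument for Serre's condition $(S_2)$ in the spirit of \cite{Fl77} (shrink $U_0$ once more so that $\mathbf x_{\widetilde\alpha}$ avoids the finitely many generic points of the locus where $\widehat{R_\fm}$ fails the depth inequality defining $(S_3)$) promotes this to $(S_2)$ off the vertex. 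For flatness, a last shrinking of $U_0$ by Lemma \ref{Avoidance}, applied to the finitely many minimal primes of $\pi_AR$, makes $\pi_A,\mathbf x_{\widetilde\alpha}$ a regular sequence on $R$, so $\pi_A$ is a nonzerodivisor on $R/\mathbf x_{\widetilde\alpha}R$ and $X\cap H$ is flat over $A$. Let $U\subseteq\mathbb{P}^n(k)^{\vee}$ be the intersection of these finitely many dense open sets; for $H\in\Sp_A^{-1}(U)$, the ring $\widehat{R_\fm}/\mathbf x_{\widetilde\alpha}\widehat{R_\fm}$ satisfies $(R_1)$ and $(S_2)$ off the vertex, hence so does $C_+^0(R/\mathbf x_{\widetilde\alpha}R)$ by descent, so $X\cap H$ is normal by Proposition \ref{Cone}(2), and it is flat over $A$ by the preceding sentence. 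I expect the genuinely delicate step to be the $(S_2)$-assertion: Proposition \ref{symbolic} and Remark \ref{symbolic2} hand over $(R_1)$ directly, but preserving the depth inequality under a general hyperplane section off the vertex needs a separate Bertini argument for Serre's conditions, and this has to be dovetailed with the descent through the completion so that the Bertini-exceptional primes coincide precisely with $V(\fm_+)$ — the part of $\Spec\widehat{R_\fm}$ that $\Proj$ does not see.
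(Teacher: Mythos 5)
Your proposal is correct and follows essentially the same route as the paper's proof: Lemma \ref{normalgraded} to realize $X=\Proj R$ for a graded normal domain, the localized pointed affine cone and Proposition \ref{Cone} to reduce everything to the completed local ring $R_{\fm}^{\wedge}$, Proposition \ref{symbolic} with Remark \ref{symbolic2} for $(R_1)$, and a Flenner-type finiteness of the non-$(S_3)$ primes of depth two (the paper's set $Q_2$, finite by \cite[Lemma 3.2]{Fl77} and openness of the $(S_3)$-locus) combined with Lemma \ref{Avoidance} for $(S_2)$. The only place you go beyond the paper is the explicit verification of flatness over $A$ via the regular sequence $\pi_A,\mathbf{x}_{\widetilde{\alpha}}$, a point the paper leaves implicit.
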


\begin{proof}
By Lemma \ref{normalgraded}, there is a Noetherian graded normal domain $R$ satisfying $(\mathbf{St})$, $R_0=A$ and $X \simeq \Proj R$. Take the homogeneous maximal ideal $\fm=\pi_A A \oplus \bigoplus_{n>0} R_n$ of $R$, together with the localized pointed affine cone $C^0_+(R)=\Spec R_{\fm} \setminus j(\Spec A)=\Spec R_{\fm} \setminus \{\fm_+ R_{\fm},\fm R_{\fm}\}$, where $j:\Spec A \hookrightarrow \Spec R_{\fm}$ is the closed immersion induced by $R_{\fm} \twoheadrightarrow R_{\fm}/\fm_+R_{\fm}=A$. Then $R_{\fm}$ is a Noetherian local ring such that $\pi_A \notin \fm^2R_{\fm}$. There is a flat surjective smooth morphism (see Proposition \ref{Cone}):
$$
h:C^0_+(R)=\Spec R_{\fm} \setminus j(\Spec A) \to X.
$$

Let us fix a system of minimal generators $x_1,\ldots,x_d \in R_1$ such that $\fm=(\pi_A,x_1,\ldots,x_d)$ and hence $\fm_+=(x_1,\ldots,x_d)$. Since $A \simeq R/(x_1,\ldots,x_d)$, $\fm_+=(x_1,\ldots,x_d)$ is a prime ideal such that $\Ht(x_1,\ldots,x_d)=\dim R-1$. Let us pick a point $\alpha=(\alpha_1:\cdots:\alpha_d) \in \mathbb{P}^{d-1}(A)$ and let $H_{\alpha}$ be the hyperplane of $\mathbb{P}^{d-1}_A$ defined by the homogeneous polynomial: $\widetilde{\alpha}_1X_1+\cdots +\widetilde{\alpha}_d X_d \in A[X_1,\ldots,X_d]$, to which we give the standard grading: $\deg(X_1)=\cdots=\deg(X_d)=1$. There is a surjection of graded rings: $A[X_1,\ldots,X_d] \twoheadrightarrow R$ by mapping $X_i$ to $x_i$. This defines a closed immersion $X \hookrightarrow \mathbb{P}^{d-1}_A$. Letting $n:=d-1$, we shall prove that $X \hookrightarrow \mathbb{P}_A^{d-1}$ has all the properties as stated in the theorem. 

Let us put
$$
\mathbf{x}_{\widetilde{\alpha}}:=\sum_{i=1}^d\widetilde{\alpha}_ix_i
$$
and consider the following fiber product diagram:
$$
\begin{CD}
C^0_+(R/\mathbf{x}_{\widetilde{\alpha}}R) @>>> X \cap H_{\alpha} \\
@VVV @VVV\\
C^0_+(R) @>h>> X \\
\end{CD}
$$
where all the vertical maps are natural closed immersions. Then by Proposition \ref{Cone}, it follows that
\begin{equation}
\label{cri1}
X \cap H_{\alpha}~\mbox{is normal}.~\iff~C^0_+(R/\mathbf{x}_{\widetilde{\alpha}}R)~\mbox{is normal.}
\end{equation}
Moreover, we have
\begin{equation}
\label{cri2}
C^0_+(R/\mathbf{x}_{\widetilde{\alpha}}R)~\mbox{is normal}.~\iff~R_{\fm}/\mathbf{x}_{\widetilde{\alpha}}R_{\fm}~\mbox{is normal after localization at}~\fp \in \Delta,
\end{equation}
where we put
$$
\Delta:=V(\mathbf{x}_{\widetilde{\alpha}}R_{\fm}) \cap C^0_+(R) \subset \Spec R_{\fm}.
$$
We find that $\Delta=V(\mathbf{x}_{\widetilde{\alpha}} R_{\fm}) \cap C^0_+(R)=V(\mathbf{x}_{\widetilde{\alpha}} R_{\fm}) \setminus \{\fm_+ R_{\fm},\fm R_{\fm}\}$. Now consider the natural map $R_{\fm} \to R_{\fm}^{\wedge}$, where $R_{\fm}^{\wedge}$ is the $\fm R_{\fm}$-adic completion of $R_{\fm}$. Then this is a flat local map between Noetherian local rings. Let $A^{\wedge}$ be the $(\pi_A)$-adic completion of $A$. Since $R_{\fm}$ is essentially of finite type over the discrete valuation ring $A$ whose field of fractions is of characteristic zero, it is an excellent local normal domain. Hence $R_{\fm}^{\wedge}$ is a complete local normal domain with $A^{\wedge}$ its coefficient ring (by our assumption, $A$ is an unramified discrete valuation ring). It defines an affine scheme map:
$$
g:\Spec R_{\fm}^{\wedge} \to \Spec R_{\fm}.
$$
Moreover, we have a commutative diagram of specialization maps:
$$
\begin{CD}
\mathbb{P}^{d-1}(A) @>>> \mathbb{P}^{d-1}(A^{\wedge}) \\
@V\Sp_A VV @V\Sp_{A^{\wedge}} VV \\
\mathbb{P}^{d-1}(k) @>=>> \mathbb{P}^{d-1}(k) \\
\end{CD}
$$

Let us pick a prime $\fq \in \Spec R_{\fm}^{\wedge}$ and put $\fp:=g(\fq) \in \Spec R_{\fm}$. Then since $g$ is a regular local map, we have
$$
R_{\fm}/\mathbf{x}_{\widetilde{\alpha}}R_{\fm}~\mbox{is normal after localization at}~\fp. \iff R_{\fm}^{\wedge}/\mathbf{x}_{\widetilde{\alpha}}R_{\fm}^{\wedge}~\mbox{is normal after localization at}~\fq.
$$
Hence in view of $(\ref{cri1})$ and $(\ref{cri2})$, it suffices to prove that there exists a Zariski-dense open subset $U \subset \mathbb{P}^{d-1}(k)$ such that the following holds:
\begin{equation}
\label{cri3}
R_{\fm}^{\wedge}/\mathbf{x}_{\widetilde{\alpha}}R_{\fm}^{\wedge}
~\mbox{is normal after localization at}~\fq \in g^{-1}(\Delta)~\mbox{for}~\alpha \in \Sp^{-1}_{A^{\wedge}}(U).
\end{equation}

So let us prove $(\ref{cri3})$ below. By elementary set theory, we have
$$
g^{-1}(\Delta)=V(\mathbf{x}_{\widetilde{\alpha}}R_{\fm}^{\wedge}) \cap g^{-1}\big(C^0_+(R)\big) \subset \Spec R_{\fm}^{\wedge}
$$
and $g^{-1}(\Delta)$ is a locally closed subset of $\Spec R_{\fm}^{\wedge}$. The set of minimal primes in ${\rm{Sing}}(R_{\fm}^{\wedge})$ is finite, where ${\rm{Sing}}(R_{\fm}^{\wedge})$ denotes the singular locus of
$\Spec R_{\fm}^{\wedge}$. By recalling that $\fm=(\pi_A,x_1,\ldots.x_d)$ and $\fm_+=(x_1,\ldots,x_d)$, it follows that $\{\fm_+ R^{\wedge}_{\fm}, \fm R^{\wedge}_{\fm}\}$ is the set of all prime ideals containing $\fm_+ R^{\wedge}_{\fm}$. Then we have $\{\fm_+ R^{\wedge}_{\fm}, \fm R^{\wedge}_{\fm}\} \cap g^{-1}(\Delta)=\emptyset$, because we know $\{\fm_+ R^{\wedge}_{\fm}, \fm R^{\wedge}_{\fm}\} \cap g^{-1}\big(C^0_+(R)\big)=\emptyset$.

To prove $(\ref{cri3})$, we need to check Serre's $(R_1)$ and $(S_2)$. Taking the open subset $U' \subset \mathbb{P}^{d-1}(k)$ as in Proposition \ref{symbolic}, it follows from Remark \ref{symbolic2} that
\begin{equation}
\label{Reg}
\Reg(R_{\fm}^{\wedge}) \cap V(\mathbf{x}_{\widetilde{\alpha}}R_{\fm}^{\wedge}) \subset \Reg(R_{\fm}^{\wedge}/
\mathbf{x}_{\widetilde{\alpha}}R_{\fm}^{\wedge})~\mbox{holds at every point}~\fq \in g^{-1}(\Delta)
\end{equation}
$$
\mbox{for every}~\alpha \in \Sp^{-1}_{A^{\wedge}}(U') \subset \mathbb{P}^{d-1}(A^{\wedge}).
$$
In other words, if we have $\fq \in \Reg(R_{\fm}^{\wedge}) \cap V(\mathbf{x}_{\widetilde{\alpha}}R_{\fm}^{\wedge}) \cap g^{-1}(\Delta)$, then $\fq \in \Reg(R_{\fm}^{\wedge}/\mathbf{x}_{\widetilde{\alpha}}R_{\fm}^{\wedge})$.

Let us put
$$
Q_1=\{\fp \in g^{-1}(\Delta)~|~\fp~\mbox{is a minimal prime in}~{\rm{Sing}}(R_{\fm}^{\wedge})\}.
$$
This is a finite set. Notice that every prime contained in $Q_1$ has height at least 2 due to the $(R_1)$ condition on $R_{\fm}^{\wedge}$. Let us put
$$
Q_2=\{\fp \in g^{-1}(\Delta)~|~\depth (R^{\wedge}_{\fm})_{\fp}=2~\mbox{and}~\dim (R^{\wedge}_{\fm})_{\fp}>2\}.
$$ 
Since $R_{\fm}^{\wedge}$ is a complete local domain, we find that the set of primes $\fp$ such that $(R_{\fm}^{\wedge})_{\fp}$ satisfies Serre's $(S_3)$ forms an open subset of $\Spec R_{\fm}^{\wedge}$ by \cite[Proposition (6.11.2)]{Gr65}. Then the $(S_2)$ condition on $R_{\fm}^{\wedge}$ allows us to apply \cite[Lemma 3.2]{Fl77} to conclude that $Q_2$ is a finite set. Set $Q_1 \cup Q_2=\{\fp_1,\ldots,\fp_m\}$. By the same reasoning as in the Step 1 of the proof of \cite[Theorem 4.4]{OS15} and proceeding as in the Step 2 of the proof of \cite[Theorem 4.4]{OS15} together with $(\ref{Reg})$, we can attach to
each $\fp_i$ a Zariski-dense open subset $U_i \subset \mathbb{P}^{d-1}(k)$ with the properties stated below: Let us put
$$
U'':=\bigcap_{i=1}^m U_i \subset \mathbb{P}^{d-1}(k).
$$
Then we obtain the following assertion:
$$
R^{\wedge}_{\fm}/\mathbf{x}_{\widetilde{\alpha}}R^{\wedge}_{\fm}~
\mbox{is normal after localization at}~\fq \in g^{-1}(\Delta)~\mbox{for}~\alpha \in \Sp^{-1}_{A^{\wedge}}(U' \cap U'').
$$
This shows that the desired conclusion $(\ref{cri3})$ is fulfilled
by letting $U:=U' \cap U''$. This finishes the proof of the theorem.
\end{proof}

We have the following corollary.

\begin{corollary}
\label{normalCartier}
Let $X$ be a normal connected affine scheme such that there is a surjective flat morphism of finite type $X \to \Spec A$, where $A$ is an unramified discrete valuation ring of mixed characteristic $p>0$. Assume that $\dim X \ge 2$, the generic fiber of $X \to \Spec A$ is geometrically connected and the residue field of $A$ is infinite. 

Then there exists an infinite family of pairwise distinct effective Cartier divisors $\{D_{\lambda}\}_{\lambda \in \Lambda}$ of $X$ such that each $D_{\lambda}$ is normal and flat over $A$.
\end{corollary}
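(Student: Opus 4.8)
The plan is to reduce to the projective case already settled in Theorem \ref{mainBertini}: compactify $X$, pass to a normalization, apply the global Bertini theorem, and then pull the good hyperplane sections back to $X$. First I would record the reductions. Since $X$ is normal, connected and Noetherian it is integral, say $X=\Spec B$ with $B$ a normal domain that is flat and of finite type over $A$; set $K=\Frac(A)$, a field of characteristic $0$. Because $X$ is normal and $K$ has characteristic $0$, the generic fiber $X_K$ is geometrically normal (normality ascends along the geometrically regular morphism $\Spec \overline{K}\to\Spec K$, as $\overline{K}/K$ is separable in characteristic $0$); being also geometrically connected, $X_{\overline{K}}$ is normal and connected, hence integral, so $X_K$ is in fact geometrically integral. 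Now fix a closed immersion $X\hookrightarrow\mathbb{A}^N_A\subseteq\mathbb{P}^N_A$ and let $\overline{X}$ be the scheme-theoretic closure of $X$ in $\mathbb{P}^N_A$. Then $\overline{X}$ is integral, projective over $A$, flat over $A$ (an integral scheme dominating a discrete valuation ring is $\pi_A$-torsion-free, hence flat), it contains $X$ as a dense open subscheme, $\dim\overline{X}=\dim X\ge 2$, and its generic fiber satisfies $\overline{X}_{\overline{K}}=\overline{X_{\overline{K}}}$, the closure of an integral scheme, hence integral; in particular $\overline{X}_K$ is geometrically connected.

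Next I would normalize. Since $K$ has characteristic $0$, the discrete valuation ring $A$ is excellent, hence Nagata, so $\overline{X}$ is Nagata and the normalization $\nu\colon Y\to\overline{X}$ is a finite morphism. Thus $Y$ is normal, connected, projective over $A$, flat and surjective over $\Spec A$, with $\dim Y=\dim X\ge 2$; moreover $X':=\nu^{-1}(X)$ is a dense open subscheme of $Y$ on which $\nu$ restricts to an isomorphism $X'\xrightarrow{\ \sim\ }X$, because $\nu$ is an isomorphism over the normal locus of $\overline{X}$ and $X$ is normal. The generic fiber $Y_K$ is normal of characteristic $0$, hence geometrically normal, and it contains $X'_K\cong X_K$ — which is geometrically integral — as a dense open subscheme; so $Y_{\overline{K}}$ is normal with a dense irreducible open subset, hence integral, and $Y_K$ is geometrically connected. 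Therefore $Y$ satisfies all the hypotheses of Theorem \ref{mainBertini}, which provides an embedding $Y\hookrightarrow\mathbb{P}^n_A$ and a Zariski-dense open subset $U\subseteq\mathbb{P}^n(k)^{\vee}$ such that $Y\cap H$ is normal and flat over $A$ for every $H\in\Sp_A^{-1}(U)$.

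Finally I would transfer these sections to $X$. Shrinking $U$ to a still Zariski-dense open $U''$, one may arrange that for $H\in\Sp_A^{-1}(U'')$ one has $Y\not\subseteq H$ — so $Y\cap H$ is a divisor, pure of codimension one — and that no irreducible component of $Y\cap H$ lies in the proper closed set $Y\setminus X'$; the first requirement discards the specialization of a proper linear subspace of $\mathbb{P}^n(A)^{\vee}$, and the second discards finitely many such, one for each codimension-one component of $Y\setminus X'$. For $H\in\Sp_A^{-1}(U'')$ put $D_H:=X'\cap(Y\cap H)=X'\cap H$, regarded as a closed subscheme of $X$ via $X'\cong X$: it is a nonempty dense open subscheme of the normal, $A$-flat scheme $Y\cap H$, hence normal and flat over $A$, and it is an effective Cartier divisor on $X$ since $H$ is locally the zero set of one section of $\mathcal{O}_Y(1)$ which, $X'$ being integral and not contained in $H$, restricts to a nonzerodivisor on $X'$. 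The embedding furnished by Theorem \ref{mainBertini} is non-degenerate — its linear coordinates map to the chosen minimal generators $x_1,\dots,x_d$ of $\fm$, which are $A$-linearly independent in $R$ — so $H^0(\mathbb{P}^n_A,\mathcal{O}(1))\to H^0(Y,\mathcal{O}_Y(1))$ is injective; combined with $H^0(Y,\mathcal{O}_Y)=R_0=A$ this shows that $Y\cap H_1=Y\cap H_2$ forces $H_1=H_2$ in $\mathbb{P}^n(A)^{\vee}$, and since $D_H$ is dense in the reduced scheme $Y\cap H$, the latter is the scheme-theoretic closure of $D_H$, so $H\mapsto D_H$ is injective on $\Sp_A^{-1}(U'')$. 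As $k$ is infinite, $U''$ and hence $\Sp_A^{-1}(U'')$ is infinite, and $\{D_H\}_{H\in\Sp_A^{-1}(U'')}$ is the desired infinite family of pairwise distinct normal effective Cartier divisors, each flat over $A$.

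I expect the crux to be the reduction step: compactifying and normalizing while preserving integrality, flatness over $A$, the dimension bound, and — most delicately — geometric connectedness of the generic fiber. It is precisely here that normality of $X$ over a characteristic-$0$ base is decisive, since it upgrades ``geometrically connected'' to ``geometrically integral'' and makes these properties survive both the closure and the normalization. The bookkeeping in the last step, ensuring that the hyperplane sections of $Y$ meet the affine part in genuine, pairwise distinct divisors, is a lesser nuisance, routine once the non-degeneracy of the embedding and the surjectivity of $\Sp_A$ are in hand.
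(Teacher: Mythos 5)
Your proposal is correct and follows essentially the same route as the paper: embed $X$ as a dense open subscheme of its projective closure, normalize (which is an isomorphism over $X$), check that the hypotheses of Theorem \ref{mainBertini} survive, apply that theorem to the resulting normal projective model $Y$, and restrict the good hyperplane sections back to $X$. Your additional bookkeeping --- shrinking $U$ so that no component of $Y\cap H$ lies in $Y\setminus X'$, and the non-degeneracy argument for pairwise distinctness --- is welcome, since the paper's own proof passes over the non-emptiness of $X\cap D'_{\lambda}$ and the distinctness of the restricted divisors rather quickly.
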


\begin{proof}
We can embed $X$ into $\mathbb{P}^n_A$ as a dense open subset of an integral projective scheme $Y$. Let us consider the normalization $h:\overline{Y} \to Y$. Then $\overline{Y}$ is a normal connected projective scheme over $\Spec A$ and $h^{-1}(X) \simeq X$ because $X \subset \Nor(Y)$. Hence, we may assume that $X$ is embedded into a normal connected projective scheme $Y$ as a dense open subset. Since the generic fiber of $X \to \Spec A$ is geometrically connected, so is the generic fiber of $Y \to \Spec A$.

Since the residue field of $k$ is infinite, every non-empty Zariski open subset of $\mathbb{P}^n(k)$ is infinite. Now we can apply Theorem \ref{mainBertini} to the normal connected projective scheme $Y$ and find a family of pairwise distinct effective Cartier divisors $\{D'_{\lambda}\}_{\lambda \in \Lambda}$ of $Y$ such that $D'_{\lambda}$ is normal and flat over $A$ and $X \cap D'_{\lambda} \ne \emptyset$, because $X \hookrightarrow Y$ is an open immersion. Then, the set $\{D_{\lambda}:=X \cap D'_{\lambda}\}_{\lambda \in \Lambda}$ is the one as desired.
\end{proof}

\begin{remark}
\label{normalCartier2}
\begin{enumerate}
\item
We have stated and proved Theorem \ref{mainBertini} only when $A$ is an unramified discrete valuation ring. Suppose that we are given a flat and finite type morphism $X \to \Spec B$ such that $B$ is a ramified discrete valuation ring. If there is a finite morphism $\Spec B \to \Spec A$ such that $A$ is an unramified discrete valuation ring and the generic fiber of $X \to \Spec A$ is geometrically connected, Corollary \ref{normalCartier} still applies to the composite morphism $X \to \Spec B \to \Spec A$. However, since $\Spec B \to \Spec A$ is ramified, it does not happen that the generic fiber of $X \to \Spec A$ is geometrically connected. Also the authors do not know if any ramified discrete valuation ring admits such an unramified discrete valuation ring.

\item
The assumption on the geometric connectedness of the generic fiber of $X \to \Spec A$ seems to be unavoidable in the proof of Theorem \ref{mainBertini}. Fix a prime $p \ne 2$ and consider the following example:
$$
\Proj\big(\mathbb{Z}_p[x,y]/(x^2-py^2)\big) \to \Spec \mathbb{Z}_p
$$
with $\deg(x)=\deg(y)=1$. Then $\mathbb{Z}_p[x,y]/(x^2-py^2)$ is not integrally closed and we get the normalization map
$$
\mathbb{Z}_p[x,y]/(x^2-py^2) \hookrightarrow \mathbb{Z}_p[x,y,\frac{x}{y}]/(x^2-py^2) \simeq \mathbb{Z}_p[\sqrt{p}][t]=:R
$$
by letting $\frac{x}{y} \mapsto \sqrt{p}$, $x \mapsto \sqrt{p}t$ and $y \mapsto t$. The graded normal domain $R$ is as shown in Lemma \ref{normalgraded}. However, we have $R_0=\mathbb{Z}_p[\sqrt{p}]$.
\end{enumerate}
\end{remark}

We also obtain the Bertini theorem for regular projective schemes.

\begin{corollary}
\label{corBertini}
Let $X$ be a regular connected projective scheme over $\Spec A$ such that $X \to \Spec A$ is flat and surjective, where $(A,\pi_A,k)$ is an unramified discrete valuation ring of mixed characteristic $p>0$. Assume that $\dim X \ge 2$, the generic fiber of $X \to \Spec A$ is geometrically connected and $k$ is an infinite field. 

Then there exists an embedding $X \hookrightarrow \mathbb{P}^n_A$, together with a Zariski-dense open subset $U \subset \mathbb{P}^n(k)^{\vee}$, where $\mathbb{P}^n(k)^{\vee}$ is the dual projective space of $\mathbb{P}^n(k)$, such that the scheme theoretic intersection $X \cap H$ is regular and flat over $A$ for any $H \in \Sp_A^{-1}(U)$.
\end{corollary}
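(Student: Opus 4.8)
The plan is to rerun the proof of Theorem \ref{mainBertini} with ``normal'' replaced throughout by ``regular''. Since a regular scheme is normal, every ingredient of that proof is available (the remaining hypotheses---$\dim X\ge 2$, geometric connectedness of the generic fiber, $k$ infinite---are exactly those of Theorem \ref{mainBertini}); moreover, the regularity hypothesis lets us skip its most delicate part, namely the analysis of the finite sets $Q_1,Q_2$ via Serre's $(S_2)$. First I would invoke Lemma \ref{normalgraded}, legitimate because $X$ regular implies $X$ normal, to get a Noetherian graded normal domain $R=\bigoplus_{n\ge 0}R_n$ with $R_0=A$, satisfying $(\mathbf{St})$, and with $X\simeq\Proj R$. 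Fixing minimal generators $x_1,\ldots,x_d\in R_1$ of $\fm=\pi_A A\oplus\bigoplus_{n>0}R_n$ produces, exactly as in Theorem \ref{mainBertini}, a closed immersion $X\hookrightarrow\mathbb{P}^{d-1}_A$ and the localized pointed affine cone $C^0_+(R)=\Spec R_\fm\setminus\{\fm_+R_\fm,\fm R_\fm\}$. Here the first gain over the normal case appears: applying Proposition \ref{Cone}(2) with Serre's condition $(R_n)$ for every $n$, the regularity of $X$ forces $C^0_+(R)$ to be regular, i.e.\ $(R_\fm)_\fp$ is regular for every $\fp\in\Spec R_\fm\setminus\{\fm_+R_\fm,\fm R_\fm\}$.

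Next I would pass to the completion as in Theorem \ref{mainBertini}: $R_\fm$ is an excellent local normal domain, so $g\colon\Spec R_\fm^\wedge\to\Spec R_\fm$ is a faithfully flat regular morphism, $A^\wedge\to R_\fm^\wedge$ is a coefficient ring map, and $\pi_A,x_1,\ldots,x_d$ is a system of minimal generators of $\fm R_\fm^\wedge$, so that Proposition \ref{symbolic} applies to $R_\fm^\wedge$. Writing $\mathbf{x}_{\widetilde{\alpha}}=\sum_{i=1}^d\widetilde{\alpha}_i x_i$ and $\Delta=V(\mathbf{x}_{\widetilde{\alpha}}R_\fm)\cap C^0_+(R)$, the proof of Theorem \ref{mainBertini} furnishes a Zariski-dense open $U'\subset\mathbb{P}^{d-1}(k)$ such that the inclusion \eqref{Reg} holds at every $\fq\in g^{-1}(\Delta)$ whenever $\alpha\in\Sp^{-1}_{A^\wedge}(U')$. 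The one step specific to the present statement is the following. Because $g$ is a regular morphism and $C^0_+(R)$ is regular, every point of $g^{-1}(C^0_+(R))$ lies in $\Reg(R_\fm^\wedge)$; and since $g^{-1}(\Delta)=g^{-1}(C^0_+(R))\cap V(\mathbf{x}_{\widetilde{\alpha}}R_\fm^\wedge)$, the inclusion \eqref{Reg} degenerates to the plain assertion that $R_\fm^\wedge/\mathbf{x}_{\widetilde{\alpha}}R_\fm^\wedge$ is regular after localization at every $\fq\in g^{-1}(\Delta)$, for all $\alpha\in\Sp^{-1}_{A^\wedge}(U')$. No further shrinking of $U'$ is required, since regularity of these localizations subsumes every Serre condition that the $(S_2)$-argument of Theorem \ref{mainBertini} was needed to recover.

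It then remains to transport this back to $X\cap H_\alpha$. As $R_\fm/\mathbf{x}_{\widetilde{\alpha}}R_\fm\to R_\fm^\wedge/\mathbf{x}_{\widetilde{\alpha}}R_\fm^\wedge$ is a base change of the regular morphism $g$, regularity of the target at $\fq\in g^{-1}(\fp)$ descends to regularity of the source at $\fp$; hence $R_\fm/\mathbf{x}_{\widetilde{\alpha}}R_\fm$ is regular after localization at every $\fp\in\Delta$, which is precisely the condition that $C^0_+(R/\mathbf{x}_{\widetilde{\alpha}}R)$ be regular. Applying Proposition \ref{Cone}(2) once more, now to the standard graded ring $R/\mathbf{x}_{\widetilde{\alpha}}R$, whose degree-zero part is still $A$ since $\mathbf{x}_{\widetilde{\alpha}}\in R_1$, gives that $X\cap H_\alpha\simeq\Proj(R/\mathbf{x}_{\widetilde{\alpha}}R)$ is regular. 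Finally, applying Theorem \ref{mainBertini} to $X$ viewed as a normal scheme and with this same embedding produces a Zariski-dense open $U''\subset\mathbb{P}^{d-1}(k)$ on which $X\cap H$ is normal and flat over $A$; with $U:=U'\cap U''$---which is exactly the open set that Theorem \ref{mainBertini} produces---and $n:=d-1$, the section $X\cap H$ is regular and flat over $A$ for every $H\in\Sp_A^{-1}(U)$.

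I expect no genuine obstacle: the corollary is essentially Theorem \ref{mainBertini} plus a simplification. The only point needing care is the chain of equivalences moving regularity of a hypersurface section among $R_\fm^\wedge$, $R_\fm$ and $\Proj$, which rests on the excellence of $R_\fm$ (so that $g$ is regular) together with Proposition \ref{Cone}; the genuinely arithmetic input, the non-containment $\mathbf{x}_{\widetilde{\alpha}}\notin\fp^{(2)}$ away from the finitely many ``vertex'' primes, is already delivered by Proposition \ref{symbolic} and Remark \ref{symbolic2}.
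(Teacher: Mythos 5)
Your proposal is correct and follows essentially the same route as the paper's own (sketched) proof: reduce to the localized pointed affine cone via Lemma \ref{normalgraded} and Proposition \ref{Cone}, complete $R_{\fm}$, and apply Proposition \ref{symbolic} with Remark \ref{symbolic2}, noting that regularity of $X$ makes the $(S_2)$ analysis of $Q_1,Q_2$ unnecessary. Your write-up simply supplies the details the paper leaves implicit.
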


\begin{proof}
The proof is modeled after that of Theorem \ref{mainBertini} and let us only sketch the idea. As $X$ is a regular scheme, it is normal. Keeping the notation as in the proof of Theorem \ref{mainBertini}, we can find a Noetherian graded normal domain $R$ such that $R$ satisfies $(\mathbf{St})$ and $X \simeq \Proj R$. We have a flat surjective morphism $C_+^0(R)=\Spec R_{\fm} \setminus \{\fm_+ R_{\fm},\fm R_{\fm}\} \to X=\Proj R$ with smooth fibers by Proposition \ref{Cone}. Then
$$
X \cap H_{\alpha}~\mbox{is regular}. \iff R_{\fm}/\mathbf{x}_{\widetilde{\alpha}}R_{\fm}~\mbox{is regular after localization at}~\fp \in \Delta.
$$
So after completing $R_{\fm}$, an application of Proposition \ref{symbolic} together with Remark \ref{symbolic2} gives infinitely many regular hyperplane sections of $X$.
\end{proof}

\section{Applications to Weil divisor class groups of normal schemes}

Let us recall the following theorem.

\begin{theorem}
\label{classicalBertini}
Let $X \subset \mathbb{P}^n_k$ be an irreducible and reduced projective variety over an algebraically closed field $k$ such that $\dim X \ge 2$. Then there exists a Zariski-dense open subset $U \subset \mathbb{P}^n(k)^{\vee}$ such that $X \cap H$ is irreducible and reduced for every $H \in U$.
\end{theorem}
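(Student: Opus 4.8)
\emph{Proof strategy (proposal).} The plan is to run the classical incidence‑correspondence argument, reducing the statement to a property of the ``generic'' hyperplane section over the function field of the dual space. Working with the given embedding $X\hookrightarrow\mathbb P^n_k$, form the incidence scheme
$$
\Gamma=\{(x,H)\in X\times(\mathbb P^n)^{\vee}\mid x\in H\},
$$
with its projections $q\colon\Gamma\to X$ and $p\colon\Gamma\to(\mathbb P^n)^{\vee}$, so that the fibre of $p$ over a point $H$ is the scheme‑theoretic intersection $X\cap H$. Since $q$ is a Zariski‑locally trivial $\mathbb P^{n-1}$‑bundle over $X$, the scheme $\Gamma$ is reduced and irreducible of dimension $\dim X+n-1$, and $p$ is surjective because $\dim X\ge 1$. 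Hence, with $K$ the function field of $(\mathbb P^n)^{\vee}$, the generic fibre $\Gamma_\eta$ of $p$ is an \emph{irreducible} $K$‑scheme: it is the generic fibre of an irreducible scheme dominating an integral base, so its unique maximal point is the generic point of $\Gamma$. By the standard constructibility of the locus of points with geometrically integral fibre for a finite‑type morphism over an integral base, that locus --- which contains $\eta$ --- contains a dense open $U\subseteq(\mathbb P^n)^{\vee}$, and since $k$ is algebraically closed, $X\cap H$ is geometrically integral, hence irreducible and reduced, for every $k$‑point $H$ of $U$; the $k$‑points of $U$ then form the required Zariski‑dense open subset of $\mathbb P^n(k)^{\vee}$. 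So it suffices to prove that $\Gamma_\eta$ is geometrically reduced and geometrically irreducible over $K$.

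For geometric reducedness, first note that $\Gamma$ is reduced, hence $(S_1)$, and $(S_1)$ passes to the localisation $\Gamma_\eta$. For the geometric $(R_0)$‑part I use the smooth locus: since $k$ is perfect and $X$ is reduced, $X^{\circ}:=\Reg(X)$ is dense open in $X$, so $\Gamma^{\circ}:=q^{-1}(X^{\circ})$ is smooth over $k$. Writing $\mathbb{T}_{x}X\subseteq\mathbb P^n$ for the embedded tangent space at a smooth point $x$, an elementary tangent‑space computation shows that $p|_{\Gamma^{\circ}}$ fails to be smooth exactly along
$$
Z=\{(x,H)\in\Gamma^{\circ}\mid \mathbb{T}_{x}X\subseteq H\},
$$
and fibering $Z$ over $X^{\circ}$ gives $\dim Z=n-1<\dim(\mathbb P^n)^{\vee}$; thus $Z$ does not dominate $(\mathbb P^n)^{\vee}$, so $\Gamma^{\circ}_\eta$ is disjoint from $Z$ and therefore smooth over $K$. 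As $\Gamma^{\circ}_\eta$ is dense open in $\Gamma_\eta$, the $K$‑scheme $\Gamma_\eta$ is $(S_1)$ and generically smooth over $K$, hence geometrically reduced over $K$.

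For geometric irreducibility I would use the criterion that a $K$‑scheme $Y$ is geometrically irreducible over $K$ if and only if $Y\times_KY$ is irreducible, applied to $Y=\Gamma_\eta$. Now $\Gamma_\eta\times_K\Gamma_\eta$ is the generic fibre of the relative square
$$
\Gamma\times_{(\mathbb P^n)^{\vee}}\Gamma=\{(x_1,x_2,H)\mid x_1,x_2\in X\cap H\}\subseteq X\times X\times(\mathbb P^n)^{\vee},
$$
which dominates $(\mathbb P^n)^{\vee}$, so by the same unique‑maximal‑point principle it is enough to prove that this square is irreducible. Over the open set $X\times X\setminus\Delta$ the first projection is a Zariski‑locally trivial $\mathbb P^{n-2}$‑bundle (hyperplanes through two distinct points), so its total space there is irreducible; and this is the one place where $\dim X\ge 2$ is genuinely used: every component of every $X\cap H$ then has dimension $\dim X-1\ge 1$ (the intersection has no isolated points, because $X$ is pure of dimension $\ge 2$), so the diagonal is nowhere dense in $(X\cap H)\times(X\cap H)$ and each point $(x,x,H)$ lies in the closure of the off‑diagonal locus. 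Hence $\Gamma\times_{(\mathbb P^n)^{\vee}}\Gamma$ is the closure of that one irreducible set, so it is irreducible, and the proof concludes.

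The substantive part --- the real content of Bertini irreducibility --- is the geometric‑irreducibility step, equivalently the assertion that $K$ is separably algebraically closed in the function field of $\Gamma$; it is exactly there that $\dim X\ge 2$ enters, forcing the off‑diagonal locus to be dense in the relative square (for $\dim X=1$ the section is a finite reduced set and the statement fails). A secondary subtlety, since no hypothesis is made on $\Char k$, is that ``reduced'' for the generic section must be read \emph{geometrically} in characteristic $p$, which is why the reducedness step is routed through $\Reg(X)$ rather than merely observing that $\Gamma_\eta$ is a localisation of the reduced scheme $\Gamma$.
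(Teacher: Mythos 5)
Your argument is correct, but it takes a genuinely different route from the paper. The paper disposes of this statement in two lines by quoting Seidenberg \cite[Theorem 11]{S50} (after checking that $K(X)/k$ satisfies his hypotheses) and \cite[Corollary 2]{CGM} for reducedness; you instead give a self-contained proof in the Jouanolou style, via the incidence variety $\Gamma\to(\mathbb P^n)^{\vee}$, proving that the generic fibre $\Gamma_\eta$ is geometrically integral over $K=k\bigl((\mathbb P^n)^{\vee}\bigr)$ and then spreading out over a dense open by constructibility of the geometrically-integral locus. All the individual steps check out: the dimension count $\dim Z=n-1$ for the non-smooth locus of $p|_{\Gamma^{\circ}}$ is valid in any characteristic for the full hyperplane system and, combined with $(S_1)$ of $\Gamma$ (which passes to the pro-open subscheme $\Gamma_\eta$ and ascends along $K\to\overline K$ because the fibres of $\Gamma_{\eta,\overline K}\to\Gamma_\eta$ are zero-dimensional, hence Cohen--Macaulay), yields geometric reducedness; the relative square is irreducible because the off-diagonal locus is an irreducible $\mathbb P^{n-2}$-bundle whose closure contains the diagonal, precisely since every component of every $X\cap H$ has dimension $\ge\dim X-1\ge 1$ by Krull's principal ideal theorem --- this is indeed the only place $\dim X\ge 2$ enters. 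What your approach buys is transparency and uniformity: it works in all characteristics without appeal to the classical field-theoretic hypotheses of Seidenberg, and it isolates exactly why one needs the full very ample system and $\dim X\ge 2$; what the paper's approach buys is brevity and consistency with the references it already uses. Two small polish points if you were to write this up: cite the criterion ``$Y$ is geometrically irreducible over $K$ if and only if $Y$ is irreducible and $Y\times_KY$ is irreducible'' (EGA IV, 4.5.8--4.5.9, reducing via minimal primes of $\kappa(\xi)\otimes_K\kappa(\xi)$), and cite EGA IV, 9.7.7 for the local constructibility of $\{s : X_s \text{ geometrically integral}\}$, which is what turns the statement about the generic fibre into the desired dense open $U$.
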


\begin{proof}
Since the ground field $k$ is assumed to be algebraically closed, if $K(X)$ denotes the function field of $X$, the extension of fields $K(X)/k$ satisfies the assumption of \cite[Theorem 11]{S50}. Hence $X \cap H$ is reduced and irreducible for all hyperplanes $H$ belonging to a Zariski-dense open subset of $\mathbb{P}^n(k)^{\vee}$. The reducedness property of $X \cap H$ also follows from \cite[Corollary 2]{CGM}.
\end{proof}

\begin{remark}
If one takes a ``hypersurface'' instead of a ``hyperplane'' in Theorem \ref{classicalBertini} in the positive characteristic case, one cannot get even reduced Cartier divisors as shown in the following example. Let $k$ be a perfect field of characteristic $p>0$ and let $\mathbb{P}^n_k=\Proj\big(k[X_0,\ldots,X_n]\big)$. For $\underline{\alpha}=(\alpha_0:\cdots:\alpha_n) \in \mathbb{P}^n(k)$. Let $H_{\underline{\alpha}} \subset \mathbb{P}^n_k$ be the hypersurface defined by the homogeneous polynomial $\alpha_0X_0^p+\cdots+\alpha_n X_n^p$. Then the Cartier divisor $H_{\underline{\alpha}}$ cannot be reduced for any choice of $\underline{\alpha}$.
\end{remark}

Let $X$ be a Noetherian integral scheme with a Cartier divisor $D$. Since $D \hookrightarrow X$ is a regular immersion of codimension one, we can define the \textit{Gysin map} $\CH_i(X) \to \CH_{i-1}(D)$, where $\CH_i(X)$ is the \textit{Chow group} of algebraic cycles of dimension $i$ on $X$ (see \cite[(2.6) or (6.2)]{Ful}). When $X$ is a normal scheme and $\dim X=d$, then we have $\Cl(X)=\CH_{d-1}(X)$, where $\Cl(X)$ denotes the \textit{Weil divisor class group} of $X$. So we get the restriction map of Weil divisor class groups $\Cl(X) \to \Cl(D)$. Let us prove the mixed characteristic version of Grothendieck-Lefschetz theorem by Ravindra and Srinivas which is stated below \cite[Theorem 1]{RaSr06}.

\begin{theorem}[Ravindra-Srinivas]
\label{RaSr}
Let $X$ be a normal connected projective variety defined over an algebraically closed field of characteristic zero. Let $|V|$ be a base-point free linear system associated to a linear subspace $V \subset H^0(V,\mathcal{L})$ for an ample invertible sheaf $\mathcal{L}$. Then there exists a Zariski-dense open subset $U \subset |V|$ such that for $Y \in U$, $Y$ is a normal connected variety and the restriction map of the Weil divisor class groups
$$
\Cl(X) \to \Cl(Y)
$$
is an isomorphism if $\dim X \ge 4$, and injective with finitely generated cokernel if $\dim X=3$. 
\end{theorem}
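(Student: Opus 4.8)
The plan is simply to cite \cite[Theorem 1]{RaSr06}: Theorem \ref{RaSr} is that result, so nothing genuinely new is required here. Since the mixed characteristic statement of Corollary \ref{RavindraSrinivas} is patterned on it, it is nonetheless worth recalling the shape of the argument, which separates the Bertini statement from the comparison of divisor class groups and then reduces the latter to the classical Grothendieck--Lefschetz theorem for \emph{smooth} projective varieties.

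For the Bertini part one invokes the classical theorem of Seidenberg \cite{S50} (see also \cite{Fl77} and Theorem \ref{classicalBertini}): because the ground field is algebraically closed of characteristic zero, $\dim X \ge 2$, the system $|V|$ is base-point free and $\mathcal{L}$ is ample --- so that the associated morphism $\phi_V \colon X \to \mathbb{P}(V)$ is finite onto an image of dimension $\dim X$ --- a general member $Y \in |V|$ is a normal irreducible projective variety, smooth at every point of the smooth locus $X^{\circ} := X \setminus \Sing(X)$ of $X$. This already produces the Zariski-dense open subset $U \subseteq |V|$ of the statement.

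For the class-group comparison I would set $S := \Sing(X)$, so that $S$ has codimension $\ge 2$ in $X$. A dimension count gives $\dim(Y \cap S) \le \dim X - 3 = \dim Y - 2$ for general $Y$, hence $\Sing(Y) \subseteq Y \cap S$ has codimension $\ge 2$ in $Y$; since deleting closed subsets of codimension $\ge 2$ leaves the Weil divisor class group of a normal variety unchanged, the restriction map $\Cl(X) \to \Cl(Y)$ is identified with $\Pic(X^{\circ}) \to \Pic(Y^{\circ})$, where $Y^{\circ} := Y \cap X^{\circ}$. One then passes to a resolution of singularities $\pi \colon \widetilde{X} \to X$ (using characteristic zero once more), chosen to be an isomorphism over $X^{\circ}$ with exceptional locus a divisor: then $\Pic(\widetilde{X}) \twoheadrightarrow \Pic(X^{\circ})$ has finitely generated kernel, the strict transform $\widetilde{Y} = \pi^{*}Y$ of a general $Y$ is smooth projective (classical Bertini for the base-point-free system $|\pi^{*}V|$ on the smooth $\widetilde{X}$) and resolves $Y$ with likewise finitely generated exceptional contribution, and one is reduced to the Grothendieck--Lefschetz theorem for $\Pic(\widetilde{X}) \to \Pic(\widetilde{Y})$: injectivity when $\dim \widetilde{X} \ge 3$, bijectivity when $\dim \widetilde{X} \ge 4$, supplemented in the surface case by the Noether--Lefschetz mechanism bounding the N\'eron--Severi group of $\widetilde{Y}$. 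A diagram chase through the two finitely generated kernels then yields the asserted dichotomy for $\Cl(X) \to \Cl(Y)$.

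The crux --- and the step I expect to be the main obstacle, were one to reprove the theorem from scratch --- is the last one. The line bundle $\pi^{*}\mathcal{L}$ is only big and nef on $\widetilde{X}$, not ample, so the Grothendieck--Lefschetz theorem for ample divisors does not apply verbatim; one must bootstrap from the ample case (for instance by factoring $\widetilde{X} \to \mathbb{P}(V)$ through the normalization of $\phi_V(X)$, on which the hyperplane bundle pulls back to an ample bundle) and carefully track the finitely generated correction terms contributed by all the exceptional loci. It is precisely this bookkeeping that forces the weaker conclusion ``injective with finitely generated cokernel'' in the borderline case $\dim X = 3$, as opposed to an honest isomorphism when $\dim X \ge 4$. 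For the purposes of the present paper it is cleanest simply to cite \cite[Theorem 1]{RaSr06} and refer the reader there for the details.
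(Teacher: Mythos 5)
Your proposal is correct and takes the same approach as the paper: Theorem \ref{RaSr} is stated there as a quotation of \cite[Theorem 1]{RaSr06} with no proof supplied, so simply citing that reference is exactly what the paper does. Your additional sketch of the Ravindra--Srinivas argument (Bertini via Seidenberg, reduction to the smooth Grothendieck--Lefschetz theorem through a resolution, and the Noether--Lefschetz bookkeeping in the threefold case) is a reasonable outline of their proof but is not required for the purposes of this paper.
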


The book \cite{H70} is a good introduction to the study of divisor class groups of projective varieties. \cite{BN14} is an excellent survey which discusses relevant topics from the viewpoint of Hodge theory begun by Griffiths and his collaborators.

\begin{corollary}
\label{RavindraSrinivas}
Let $X \to \Spec A$ be a surjective projective flat morphism, where $(A,\pi_A,k)$ is an unramified discrete valuation ring with mixed characteristic and algebraically closed residue field. Assume that
$X$ is a normal scheme, the generic fiber is geometrically connected and if $X_k=\sum_{i=1}^t m_i \Gamma_i$ denotes the closed fiber of $X \to \Spec A$, then each $\Gamma_i$ is a principal, irreducible and reduced divisor. 

Then there exists an infinite family of pairwise distinct effective Cartier divisors $\{D_{\lambda}\}_{\lambda \in \Lambda}$ of $X$ such that each $D_{\lambda}$ is normal, connected, and the restriction map of the Weil divisor class groups
$$
\Cl(X) \to \Cl(D_{\lambda})
$$
is an isomorphism if $\dim X \ge 5$, and injective with finitely generated cokernel if $\dim X=4$. 
\end{corollary}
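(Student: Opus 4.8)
The plan is to reduce to the geometric generic fibre, where the Grothendieck--Lefschetz theorem of Ravindra--Srinivas (Theorem~\ref{RaSr}) is available over the algebraically closed field $\overline{K}$, $K:=\Frac(A)$, and then to descend the conclusion back to $K$ by Galois cohomology.

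\emph{Reduction to the generic fibre.} First $X$ is connected---a disconnection of $X$ would, since each connected component of the projective flat $A$-scheme $X$ surjects onto $\Spec A$, disconnect the generic fibre---hence integral, and $H^0(X,\mathcal{O}_X)=A$ by the argument inside the proof of Lemma~\ref{normalgraded}. The hypothesis on $X_k$ is then very rigid: if $\Gamma_i$ is a prime Cartier divisor with $\mathcal{O}_X(\Gamma_i)\cong\mathcal{O}_X$, its canonical section is an $f_i\in H^0(X,\mathcal{O}_X)=A$ with $\divisor_X(f_i)=\Gamma_i$; writing $f_i=u\pi_A^{e_i}$ gives $\Gamma_i=e_i\,\divisor_X(\pi_A)=e_i\sum_j m_j\Gamma_j$, which forces $t=1$ and $m_1=e_1=1$. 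So $X_k=\Gamma_1$ is reduced and irreducible, $\divisor_X(\pi_A)=[\Gamma_1]$, and the excision sequence $\mathbb{Z}\,[\Gamma_1]\to\Cl(X)\to\Cl(X_K)\to 0$ ($X_K:=X\times_A K$ being the open complement of $X_k$ in $X$) yields $\Cl(X)\xrightarrow{\ \sim\ }\Cl(X_K)$, compatibly with restriction to a hyperplane section.

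\emph{Producing the divisors.} Fix the embedding $X\hookrightarrow\mathbb{P}^n_A$ of Theorem~\ref{mainBertini}, which is non-degenerate since the chosen degree-one generators are $A$-linearly independent, and the dense open $U_1\subset\mathbb{P}^n(k)^\vee$ it provides. Classical Bertini (Theorem~\ref{classicalBertini}) applied to $\Gamma_1\subset\mathbb{P}^n_k$---reduced, irreducible, of dimension $\dim X-1\geq 3$---gives a dense open $U_2\subset\mathbb{P}^n(k)^\vee$ with $\Gamma_1\cap H_k$ reduced and irreducible for $H_k\in U_2$. On $X_{\overline{K}}$---a normal connected (char.\ $0$, so geometrically normal) projective $\overline{K}$-variety of dimension $\dim X-1$---Theorem~\ref{RaSr} for the base-point-free system cut by $\mathcal{O}_{\mathbb{P}^n_{\overline{K}}}(1)$ gives a dense open $U_3\subset(\mathbb{P}^n_{\overline{K}})^\vee$ such that for $H'\in U_3$, $X_{\overline{K}}\cap H'$ is normal and connected and $\Cl(X_{\overline{K}})\to\Cl(X_{\overline{K}}\cap H')$ is an isomorphism if $\dim X\geq 5$, injective with finitely generated cokernel if $\dim X=4$. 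Now $\Sp_A^{-1}(U_1\cap U_2)\subset\mathbb{P}^n(A)^\vee=\mathbb{P}^n(K)^\vee$ is Zariski dense in $\mathbb{P}^n_K$ (it contains a full $\pi_A$-adic ball around any lift of a point of $U_1\cap U_2$, and $A$ is infinite), so its image in $(\mathbb{P}^n_{\overline{K}})^\vee$ is Zariski dense and meets the dense open $U_3$ in an infinite set $\Lambda$. For $H\in\Lambda$ put $D_H:=X\cap H$: it is normal and flat over $A$ by Theorem~\ref{mainBertini}; its geometric generic fibre $X_{\overline{K}}\cap H_{\overline{K}}$ is connected, so $D_H$, being flat and projective over $\Spec A$, is connected, hence integral; and the excision argument above---using that $(D_H)_k=\Gamma_1\cap H_k$ is reduced irreducible and equals $\divisor_{D_H}(\pi_A)$---gives $\Cl(D_H)\xrightarrow{\ \sim\ }\Cl((D_H)_K)$. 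Distinct $H\in\Lambda$ restrict to distinct sections of $\mathcal{O}_X(1)$, hence the $D_H$ are pairwise distinct effective Cartier divisors, each normal, connected and flat over $A$.

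\emph{Descent, and the main obstacle.} For a proper geometrically integral $K$-variety $V$, taking $G:=\Gal(\overline{K}/K)$-cohomology of $0\to\overline{K}^\times\to L^\times\to\Div(V_{\overline{K}})\to\Cl(V_{\overline{K}})\to 0$ ($L$ the function field of $V_{\overline{K}}$) and using Hilbert~90 identifies $\Cl(V_K)$ with $\ker\!\big(\Cl(V_{\overline{K}})^G\xrightarrow{\delta_V}\operatorname{Br}(K)\big)$, the connecting map $\delta_V$ being compatible with restriction (Gysin) maps. Writing $r\colon\Cl(X_K)\to\Cl((D_H)_K)$ and $\bar r\colon\Cl(X_{\overline{K}})\to\Cl((D_H)_{\overline{K}})$ for the restriction maps, $\bar r$ is injective (Theorem~\ref{RaSr}) and $G$-equivariant, so any $\xi$ with $\bar r(\xi)\in\Cl((D_H)_K)$ is $G$-invariant, whence $\delta_{D_H}(\bar r(\xi))=\delta_X(\xi)$ forces $\bar r^{-1}\big(\Cl((D_H)_K)\big)=\Cl(X_K)$. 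Applying the snake lemma to the map of short exact sequences $0\to\Cl(X_K)\to\Cl(X_{\overline{K}})\to C_X\to 0$ and $0\to\Cl((D_H)_K)\to\Cl((D_H)_{\overline{K}})\to C_D\to 0$ given by $r,\bar r,\bar r_C$: injectivity of $\bar r$ makes $r$ injective, the identity just proved says $\ker(\bar r_C)=0$, and so $\coker(r)$ embeds into $\coker(\bar r)$. Hence $r$ is an isomorphism for $\dim X\geq 5$ and injective with finitely generated cokernel for $\dim X=4$; transporting along $\Cl(X)\cong\Cl(X_K)$ and $\Cl(D_H)\cong\Cl((D_H)_K)$ proves the corollary with index set $\Lambda$. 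The hard part is precisely this last step: Theorem~\ref{RaSr} lives over $\overline{K}$, neither $\Cl(X_K)\to\Cl(X_{\overline{K}})$ nor $\Cl((D_H)_K)\to\Cl((D_H)_{\overline{K}})$ need be surjective, and what rescues the computation is that the discrepancy is in both cases governed by the \emph{same} Brauer-group obstruction, with which restriction is compatible, so the two obstructions cancel in the snake diagram; a secondary point---the one genuinely using the mixed-characteristic hypotheses---is that the dense opens $U_1,U_2$ over $k$ and $U_3$ over $\overline{K}$ must be hit simultaneously by infinitely many $A$-rational hyperplanes, which is arranged through $\Sp_A$ and the infinitude of $A$.
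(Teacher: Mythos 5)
Your argument is correct and follows the paper's skeleton --- the excision sequence of \cite[Proposition 11.40]{GW10} applied to $(X,X_K)$ and to $(D_\lambda,(D_\lambda)_K)$ to kill the classes of the principal components of the closed fibres, Theorem \ref{mainBertini} for normality and flatness of $X\cap H$, Theorem \ref{classicalBertini} on the special fibre so that the second excision applies, the Ravindra--Srinivas theorem on the generic fibre, and the same final commutative square --- but it handles one step genuinely differently. The paper applies Theorem \ref{RaSr} to $X_K$ over the non-algebraically-closed field $K=\Frac(A)$ by invoking the ``remarks at page 584'' of \cite{RaSr06}; you instead apply it only over $\overline{K}$ and descend, identifying $\Cl(V_K)$ with $\ker\big(\Cl(V_{\overline{K}})^{G}\to\operatorname{Br}(K)\big)$ via Hilbert 90 and cancelling the two Brauer obstructions in a snake diagram. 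This makes the proof more self-contained, at the price that the one assertion you do not verify --- compatibility of the Gysin restriction $\Cl(X_{\overline{K}})\to\Cl((D_H)_{\overline{K}})$ with the connecting maps into $\operatorname{Br}(K)$, which requires representing a $G$-invariant class by a divisor meeting the $G$-stable subvariety $(D_H)_{\overline{K}}$ properly so that restriction is defined at the level of divisors and rational functions --- is essentially the content the paper delegates to the cited remarks, and it should be spelled out. Two further points. Your observation that the hypotheses force $t=1$ and $m_1=1$ (so that $X_k$ is integral and equals $\divisor_X(\pi_A)$) is correct and absent from the paper; it shows the hypothesis on the closed fibre is more rigid than it appears, though the excision argument does not actually need it. And your density argument --- that $\Sp_A^{-1}(U_1\cap U_2)$ is Zariski dense in $\mathbb{P}^n(K)^{\vee}$ because it contains a $\pi_A$-adic ball, hence meets the pullback of the dense open $U_3\subset(\mathbb{P}^n_{\overline{K}})^{\vee}$ in an infinite set --- is a correct and slightly more careful version of the paper's bare assertion that $\mathcal{U}\cap\mathcal{V}$ is infinite.
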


We should note that the closed fiber $X_k$ is not necessarily a normal scheme.

\begin{proof}
Let $K$ be the field of fractions of $A$. It is a field of characteristic zero. Let $X_K$ be the generic fiber of $X \to \Spec A$. Then $X_K$ is a geometrically connected normal projective variety over $K$ and $\dim X_K=\dim X-1$. Let 
us take $X \hookrightarrow \mathbb{P}_A^n$ as in Theorem \ref{mainBertini}. Then there is a commutative diagram:
$$
\begin{CD}
X_k @>\hookrightarrow>> \mathbb{P}^n_k \\
@VVV @VVV \\
X @>\hookrightarrow>> \mathbb{P}_A^n \\
@AAA @AAA \\
X_K @>\hookrightarrow>> \mathbb{P}_K^n
\end{CD}
$$
Applying \cite[Proposition 11.40]{GW10} to the pair $(X,X_K)$, there is an exact sequence of groups
$$
\sum_{i=1}^t\mathbb{Z} [\Gamma_i] \to \Cl(X) \to \Cl(X_K) \to 0.
$$
Since $\Gamma_i$ is a principal divisor for $i=1,\ldots,t$ by assumption, the image of $[\Gamma_i]$ in $\Cl(X)$ is trivial. Hence we have an isomorphism:
\begin{equation}
\label{isom1}
\Cl(X) \simeq \Cl(X_K).
\end{equation}
By Theorem \ref{mainBertini}, there exists a Zariski-dense open subset $U \subset \mathbb{P}^n(k)^{\vee}$ such that $X \cap H$ is a normal Cartier divisor for every $H \in \mathcal{U}:=\Sp_A^{-1}(U) \subset \mathbb{P}^n(A)^{\vee}$.

On the other hand, applying Theorem \ref{RaSr} together with remarks at page 584 of \cite{RaSr06}, we have the following assertion:
\begin{enumerate}
\item[-]
There exists a Zariski-dense open subset $\mathcal{V} \subset \mathbb{P}^n(K)^{\vee}=\mathbb{P}^n(A)^{\vee}$ such that $X_K \cap H$ on $X_K$ is connected and normal for every $H \in \mathcal{V}$ and the restriction map $\Cl(X_K) \to \Cl(X_K \cap H)$ is an isomorphism if $\dim X_K \ge 4$, and injective with finitely generated cokernel if $\dim X_K=3$. 
\end{enumerate}
It is clear that $\Lambda:=\mathcal{U} \cap \mathcal{V} \subset \mathbb{P}^n(A)^{\vee}$ is an infinite set. 

Denote by $H_{\lambda}$ the hyperplane of $\mathbb{P}^n_A$ corresponding to $\lambda \in \Lambda$. Then we have an infinite family of pairwise distinct effective Cartier divisors $\{D_{\lambda}:=X \cap H_{\lambda}\}_{\lambda \in \Lambda}$ on $X$. Since $X_K \cap D_{\lambda}=X_K \cap H_{\lambda}$ is connected and $D_{\lambda}$ is flat over $\Spec A$, the divisor $D_{\lambda}$ is the Zariski closure of $X_K \cap D_{\lambda}$ in $X$ and thus, $D_{\lambda}$ is connected. It follows by the normality of $D_{\lambda}$ that each $D_{\lambda}$ is irreducible and reduced. After shrinking $\Lambda \subset \mathbb{P}^n(A)^{\vee}$, we can assume that $\Cl(D_{\lambda}) \to \Cl(X_K \cap D_{\lambda})$ is an isomorphism. Indeed, it suffices to choose $D_{\lambda}$ so that the following assertion holds by applying Theorem \ref{classicalBertini} to each $\Gamma_i$ (notice that $\Gamma_i \cap D_{\lambda}$ is a
hyperplane section of $\Gamma_i \subset \mathbb{P}^n_k$).
\begin{enumerate}
\item[-]
The closed fiber of the map $D_{\lambda} \to \Spec A$ is given as $X_k \cap D_{\lambda}=\sum_{i=1}^t m_i (\Gamma_i \cap D_{\lambda})$ and each $\Gamma_i \cap D_{\lambda}$ is irreducible and reduced.
\end{enumerate}
Then, since the restriction of a principal Cartier divisor is principal, it follows that $\Gamma_i \cap D_{\lambda}$ is a principal, irreducible and reduced divisor of $D_{\lambda}$. Since the generic fiber of $D_{\lambda} \to \Spec A$ is $X_K \cap D_{\lambda}$, applying \cite[Proposition 11.40]{GW10}, we have 
$$
\sum_{i=1}^t\mathbb{Z} [\Gamma_i \cap D_{\lambda}] \to \Cl(D_{\lambda}) \to \Cl(X_K \cap D_{\lambda}) \to 0
$$
and thus
\begin{equation}
\label{isom2}
\Cl(D_{\lambda}) \simeq \Cl(X_K \cap D_{\lambda}).
\end{equation}

Combining $(\ref{isom1})$ and $(\ref{isom2})$ together, we have the commutative diagram of the Weil divisor class groups:
$$
\begin{CD}
\Cl(X_K) @>>> \Cl(X_K \cap D_{\lambda}) \\
@| @| \\
\Cl(X) @>>> \Cl(D_{\lambda}) \\
\end{CD}
$$
Since the upper horizontal map has all the required properties as in Theorem \ref{RaSr}, the lower horizontal map fulfills the similar properties.
\end{proof}

\begin{acknowledgement}
The authors are grateful to the referee for pointing out errors and providing remarks.
\end{acknowledgement}

\end{document}